\def\supplementfilename{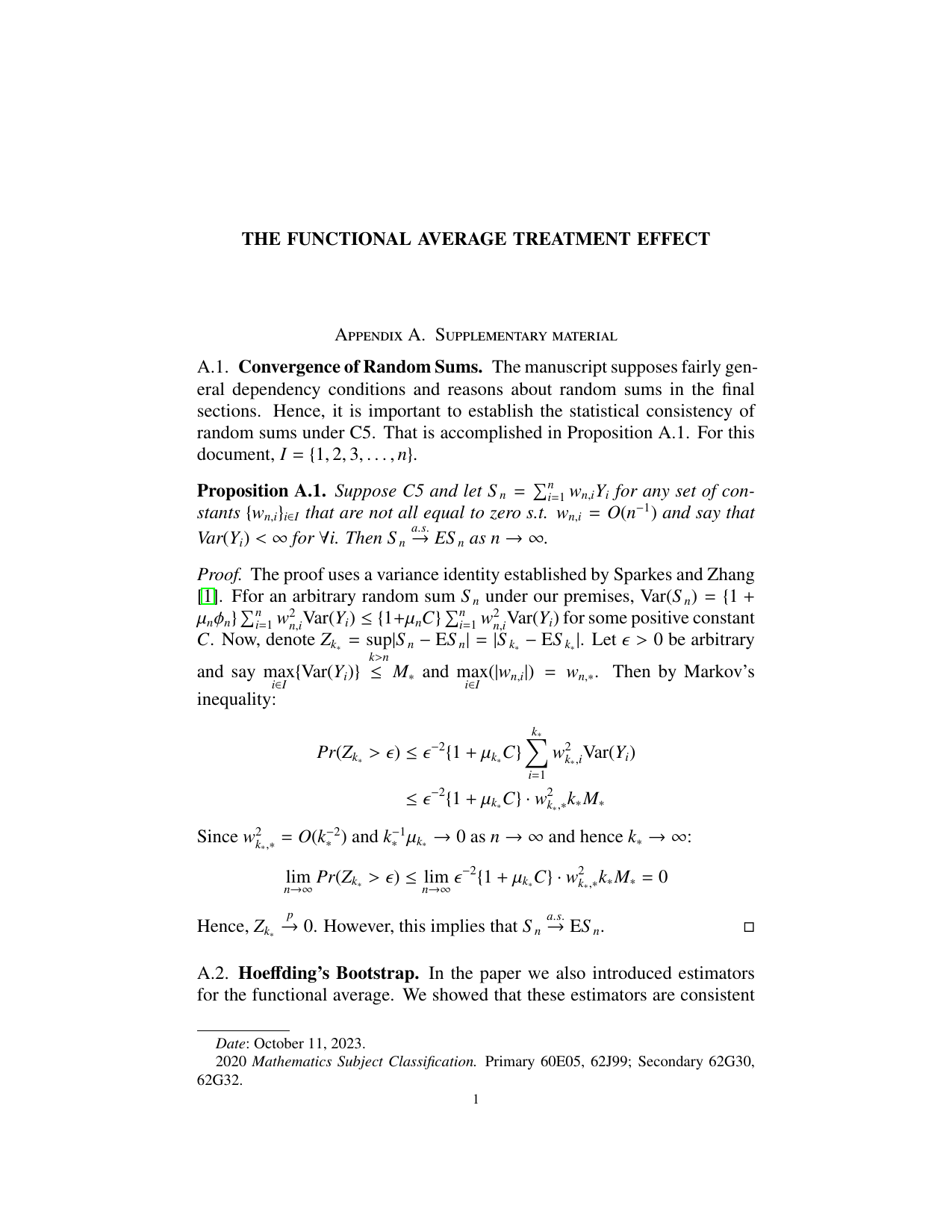}
\def\numbersupplementpages{\the\pdflastximagepages}
\newif\ifarXiv
\theoremstyle{plain}
\newtheorem{proposition}{Proposition}
\theoremstyle{definition}
\newtheorem{definition}{Definition}
\theoremstyle{remark}
\numberwithin{equation}{section}
\begin{document}

\title{The Functional Average Treatment Effect}

\author{Shane Sparkes}
\address{Department of Population and Public Health Sciences, University of Southern California, Los Angeles, California 90033}
\curraddr{Department of Population and Public Health Sciences, University of Southern California, Los Angeles, California 90033}
\email{sgugliel@usc.edu}

\author{Erika Garcia}
\address{Department of Population and Public Health Sciences, University of Southern California, Los Angeles, California 90033}
\curraddr{Department of Population and Public Health Sciences, University of Southern California, Los Angeles, California 90033}
\email{garc991@usc.edu}

\author{Lu Zhang}
\address{Department of Population and Public Health Sciences, University of Southern California, Los Angeles, California 90033}
\curraddr{Department of Population and Public Health Sciences, University of Southern California, Los Angeles, California 90033}
\email{lzhang63@usc.edu}

\subjclass[2020]{Primary 60E05, 62J99; Secondary 62G30, 62G32}

\date{October 11, 2023}


\keywords{Causal inference, functional average, extreme order statistics, mean exchangeability, linear regression,  Hoeffding bootstrap}

\begin{abstract}
This paper establishes the functional average as an important estimand for causal inference. The significance of the estimand lies in its robustness against traditional issues of confounding. We prove that this robustness holds even when the probability distribution of the outcome, conditional on treatment or some other vector of adjusting variables, differs almost arbitrarily from its counterfactual analogue. This paper also examines possible estimators of the functional average, including the sample mid-range, and proposes a new type of bootstrap for robust statistical inference: the Hoeffding bootstrap. After this, the paper explores a new class of variables, the $\mathcal{U}$ class of variables, that simplifies the estimation of functional averages. This class of variables is also used to establish mean exchangeability in some cases and to provide the results of elementary statistical procedures, such as linear regression and the analysis of variance, with causal interpretations. Simulation evidence is provided. The methods of this paper are also applied to a National Health and Nutrition Survey data set to investigate the causal effect of exercise on the blood pressure of adult smokers.
\end{abstract}

\maketitle
\section{Introduction}
The Neyman-Rubin (NR) model is an important framework for causal inference with observational designs \cite{rubin2019essential, pearl2010causal, holland1986statistics, imbens2015causal}. Say we are interested in studying a population of random variables $\{ Y_i \}_{i \in I}$, where $I = \{1, \ldots, N \}$, conditional on exposure to $T_i$. Here, we specify $T_i$ as binary for simplicity and denote this conditional variable as $Y_{t_i}$. At any point in time, it is impossible to observe both $Y_{t_i=1}$ and $Y_{t_i=0}$ for an arbitrary unit $i$: a fact that makes individual causal contrasts, such as $Y_{t_i=1} - Y_{t_i=0}$, undefined. This problem characterizes the `fundamental problem of causal inference' \cite{ding2018causal}. The NR model frames this challenge in the language of missing outcomes and constructs populations of counterfactual probability distributions to address it, say $\{ Y^{t=1}_i \}_{i \in I}$ and $\{ Y^{t=0}_i \}_{i \in I}$. These populations represent a hypothetical situation such that (s.t.) the treatment status of the entire population has been experimentally fixed to $T=t$. The goal under the NR framework is to identify \textit{summary} causal effects that would have occurred, provided one could actually have constructed and manipulated both $\{ Y^{t=1}_i \}_{i \in I}$ and $\{ Y^{t=0}_i \}_{i \in I}$ \cite{hernan2010causal}. More specifically, under this rubric, we wish to identify some multivariate function $g$ and some vector of adjusting variables $\mathbf{L}$ s.t. $\text{E} \{ g(Y_{t_1=1}, \ldots, Y_{t_{n_1} =1}) - g(Y_{t_1=0}, \ldots, Y_{t_{n_0} =0}) | \mathbf{L} \}$ equals its counterpart evaluated from the elements of the hypothetical populations, at least asymptotically \cite{imbens2010rubin}. Here, we again use the difference of estimands only as an example. When this is possible, unmeasured variables are said to be ignorable, conditional on $\mathbf{L}$, and the conclusions of the observational study are equivalent to those of an experimental one \cite{rosenbaum1983central, holland1987causal}.

Although $g$ can be any function that has scientifically meaningful properties, a small number of summary functions have dominated the literature. Estimands related to quantiles have commandeered some attention, for instance \cite{jin2008principal, imbens2010rubin, belloni2017program, gangl2010causal}. A lion's share, however, has been claimed by the arithmetic mean. Unfortunately, the adage that `there is no free lunch' applies to this function. When a researcher wishes to replace $\text{E}Y^{t}$ with $\text{E}Y_{t}$ to estimate the \textit{expected} treatment effect, a set of additional conditions are required: consistency (C1), mean exchangeability (C2), and positivity (C3) \cite{cole2009consistency}. These conditions will soon be defined in detail. For now, it suffices to state that researchers often attempt to achieve C2 conditional on a vector of adjusting variables $\mathbf{L}$ so that $\text{E}Y_{t, \mathbf{L}} = \text{E}Y_{\mathbf{L}}^{t}$. Standardization (iterated expectation) is then used to yield the quantity of interest since $\text{E}\{ \text{E} Y_{\mathbf{L}}^t\} = \text{E}Y^t$ under C1-C3.  A cardinal problem is that C2 is non-trivial to achieve in observational studies, even conditional on some random vector $\mathbf{L}$ \cite{hernan2006estimating, greenland1999causal, greenland1999confounding}. Moreover, $\mathbf{L}$ is often high in dimension. Parametric methods must then be employed to approximate the mean model of $Y$ in conjunction with standardization and bias likely ensues outside of toy examples \cite{hernan2010causal}.

One of our cardinal contributions is to highlight a different summary causal effect---the functional average---as a valuable estimand for the NR framework since it avoids many of these challenges, it can impart a causal interpretation to the results of standard statistical procedures, and it is identifiable under mild conditions. For example, if $Y_t$ and $Y^t$ have the same image in the traditional analysis sense, this is sufficient. Their probability distributions can otherwise differ arbitrarily. Confounding is immaterial, insofar as it does not change the image of the underlying function(s). So is informative sampling more generally. All that matters for identification is that---theoretically---$Y_t$ and $Y^t$ pull from the same set of real numbers, at least conditional on some $\mathbf{L}$. Although not necessary, this is at least sufficient for establishing what we call \textit{functional average} exchangeability.

The remainder of this paper goes as follows. For clarity, we mathematically define functional averages and prove an elementary but fundamental claim in Section 2. Moreover, we show that the functional average treatment effect is salient when the researcher believes that an intervention alters the set of possible values that an outcome can achieve or no expected causal effect exists. After this, we examine a small set of functional average estimators, including the sample mid-range, and establish their statistical consistency under general conditions. Since their sampling distributions are largely intractable, however, we re-purpose the bootstrap as a method for conservative inference. In Section 3, we provide elucidation on a particular class of bounded random variables---the $\mathcal{U}$ class---that generalizes the notion of symmetry and assists in the estimation of functional averages. We also show that $\mathcal{U}$ random variables possess many favorable properties when it comes to causal inference. These facts allow us to also prove---under the auspices as a causal theory---that linear regressions estimate causal effects under a standard set of assumptions already employed for associational studies. Section 4 presents simulation evidence that substantiates our claims.

Finally, in Section 5, we use our strategies in conjunction with data from the National Health and Nutrition Examination Survey Data I Epidemiologic Follow-up Study (NHEFS) to investigate if exercise activity causally impacts the systolic blood pressure (SBP) of adult smokers. Plentiful evidence exists that smoking is associated with cardiovascular disease processes and mortality \cite{glantz1991passive, stallones2015association}. Evidence has also been presented that smoking is a factor in arterial stiffening \cite{narkiewicz2005smoking}. However, while some literature has supported the proposition that exercise lowers arterial blood pressure \cite{elley2002aerobic} and that smokers who exercise show fewer signs of arterial stiffening \cite{park2014does}, the evidence is not yet definitive. Functional average estimation targets deterministic changes in the structure of an outcome variable and is thus an informative tool in this context.

\section{Functional Average Treatment Effect}

In this section, we first introduce important definitions and notation, although some concepts will be left implicit for readability. For instance, we leave the underlying probability space of the form $(\Omega, \mathcal{F}, \mathcal{P})$ for an arbitrary random variable $Y(\omega): \Omega \to \mathbb{R}$ unstated, and the same goes for probability spaces defining joint distributions. Recall that the support of a random variable is a smallest closed set $\mathcal{S}$ s.t. $\text{Pr}(Y \in \mathcal{S}) = 1$. Alternatively, it can also be defined as the closure of the set of values $\mathcal{S}$ s.t. the density or mass function $f(y) > 0$ for $\forall y \in \mathcal{S}$. Here, we will be dealing with bounded random variables, which means that $\mathcal{S}$ is a strict subset of the real numbers. This is not a limiting constraint. Anything that can be empirically measured is necessarily bounded.

With these concepts, we can revisit the functional average. If $\mathcal{S}$ is discrete, define $R = |\mathcal{S}|$, where $|\cdot|$ in this context denotes the number of elements in the set. If $Y$ is continuous, then $R = \int_{\mathbb{R}} \{ 1_{y \in \mathcal{S}} \} dy$ and the functional average $\text{Av}(\cdot)$ is $\text{Av}(Y) = R^{-1} \int_{\mathbb{R}} \{ y 1_{y \in \mathcal{S}} \} dy$. For discrete variables, it is $\text{Av}(Y) = R^{-1} \sum_{y_i \in \mathcal{S}} y_i$. Note that we have avoided the use of general measures for purposes of accessibility.

Sometimes it will be the case that, for some measurable function $g$, $Y = g(X_1, \ldots, X_k)$. Then the support of $Y$ with respect to (w.r.t.) the joint distribution of $(X_1, \ldots, X_k) = \mathbf{X} \in \mathbb{R}^k$ is some general region $\mathcal{R} \subseteq \mathcal{S}_1 \times \cdots \times \mathcal{S}_k$, where each $\mathcal{S}_i$ indicates the support of $X_i$. Without loss of generality (WLOG), we will henceforth deal only with the continuous case. In this context, $R = \int_{\mathbb{R}^k} 1_{(x_1, \ldots, x_k) \in \mathcal{R}} dx_1 \cdots dx_k$ and $\text{Av}_{\mathbf{x}}(Y) = R^{-1} \int_{\mathcal{R}} g(x_1, \ldots, x_k) dx_1 \cdots dx_k$.

Now, let $\text{E}_{h}Y$ indicate that the expectation of $Y$ is taken w.r.t. a different density or mass function $h(y)$ that is also defined on $\mathcal{S}$. Then it is also apparent that $\text{Av}(Y) = \text{E}_hY$ when $h(y)$ is a uniform density or mass function. When a subscript is omitted and $Y \sim f(y)$, it will be understood that the expectation is taken w.r.t. the baseline density (mass) function $f$, provided it exists. Otherwise, we say that $\text{E}_U Y = \text{Av}(Y)$ as a special case, although we will mostly avoid this notation. This is because $\text{Av}(Y)$ is best interpreted with the lenses of basic, deterministic analysis. The exception to this statement is when $Y$ truly follows a uniform probability law.

A functional average treatment effect then---for any two treatment values of interest $t, t'$---can be defined as $h\{ \text{Av}(Y^t), \text{Av}(Y^{t'}) \} = h\{ \text{E}_U Y^t, \text{E}_UY^{t'} \}$ for a user-specified function $h$. In this paper, we set $h$ to a simple difference for exploratory purposes, i.e., $h\{ \text{Av}(Y^t), \text{Av}(Y^{t'}) \}= \Delta_{t, t'} =  \text{Av}(Y^t) - \text{Av}(Y^{t'})$.

\subsection{Examples of Applicability}

The average functional value is not a usual focus in statistical settings. The expected value w.r.t. the baseline measure has instead largely been an object of interest. Hence, we offer a short argument and demonstration of its importance as a preliminary apologia. By definition, an expected value is a sum of all possible values, where each value is weighted by the probability of observation or its density. However, the chance (or density) of observation is extraneous to causal relationships that are unrelated to altered probabilities. Although the functional average can also be construed, albeit counterfactually in most cases, as an expected value, the uniform measure imbues it with a more deterministic interpretation. This is because it does not require a probabilistic framework, although such a framework is often necessary for its estimation. 

\subsubsection{Example 1} Say $T$ is a binary treatment variable s.t. $T=1$ when a particular psychotropic medication is received and $Y$ is a Likert scale measuring anxiety in individuals with clinical depression. Also say that $Y_{t=0}$ can take any integer between one and ten with the following probabilities:
$$\{.01, .04, .05, .1, .15, .15, .3, .1, .05, .05 \}$$

Then $\text{E}Y_{t=0} = 6.14$ and $\text{Av}(Y_{t=0}) = 5.5$. Now, say $Y_{t=1}$ has non-zero mass only on integers between one and eight with the following probabilities: $\{.01, .01, .01, .05, .1, .5, .18, .14\}$. Under this scheme, it is also the case that $\text{E}Y_{t=1} = 6.14$. This makes the detection of a causal effect impossible if only the expected treatment effect is utilized. However, $\text{Av}(Y_{t=1}) = 4.5$, a value that possibly reflects the elimination of extreme anxiety under treatment, albeit at the cost of more mild to moderate anxiety experiences.

\subsubsection{Example 2} Again, let $T$ be a binary treatment variable for simplicity and say $Y_{t=0}$ is a random variable for the untreated systolic blood pressure (SBP) of individuals who have been diagnosed with high blood pressure. We will assume that $Y_{t=0}$ follows a truncated normal distribution with a mean at 155 mmHg and support $\mathcal{S}_{t=0} = [110, 370]$ and $Y_{t=1}$ follows a truncated normal distribution with mean 125 mmHg on support $\mathcal{S}_{t=1} = [90, 250]$. Here, both an expected and a functional average treatment effect are possibly present and relevant. For the latter, $\text{Av}(Y_{t=0}) = 240$, while $\text{Av}(Y_{t=1}) = 170$. The functional average in this example offers additional information pertaining to changes in the possibilities of extremes that cannot be captured by expected values with the baseline probability measure, insofar as it is not uniform. 

Moreover, these examples elucidate how functional averages and their differences remain invariant to any redistribution of the presented probabilities insofar as they remain non-zero. For instance, say we employed a biased sampling mechanism (such as a convenience sampling) and we also failed to measure all confounders for Example 2. As a consequence, say $Y_{t=1}$ follows a truncated normal distribution s.t. $\text{E}Y_{t=1} = 110$ and $Y_{t=0}$ follows a truncated normal distribution s.t. $\text{E}Y_{t=0} = 180$. These facts would not matter insofar as the convenience sample was executed in such a way as to preserve the sets of values that the functions could theoretically materialize under an experimental design. For some estimators, preservation of the extremes alone is sufficient.

\subsection{Identifying and Estimating Counterfactual Functional Averages}

Next, we prove some basic statements about functional averages under mild conditions and the rubric of informative sampling. For this, we specify a conditional population of interest $P= \{ Y_{t, 1}, \ldots , Y_{t, N} \}$ s.t. $Y_{t, 1} \sim f(y| T=t)$ WLOG. This setup can be defined with additional conditioning or extended to unconditional circumstances, but this is omitted here for brevity. Additionally, observe a complete-case sample $\zeta \subset P$ and a complementary vector of indicator variables $\boldsymbol{\delta} = (\delta_1, \ldots, \delta_N)$ s.t. $\delta_i = 1$ if and only if $Y_{t, i} \in \zeta$. We also assume that $\text{E}(\delta_i| y_i, t) > 0$ for all $\forall i$, which implies that $E(\delta_i | t) =\pi_i > 0$ for all $i$, an assumption that is typically called sampling positivity. It is well-known that an arbitrary $Y_{t, i} \in \zeta$ does not, in general, follow the distribution of the theoretical population \cite{pfeffermann2009inference, pfeffermann1998parametric, patil1987weighted, patil1978weighted}. Instead, $Y_{t, i} \in \zeta$ possesses a weighted density or mass function $f_{\delta}(y_{i}| t) = \pi^{-1}_i \text{E}(\delta_i | y_{i}, t) f(y_{i}|t)$. It is easy to show, then, that $\text{E}_{\delta}Y_{t, i} = \pi^{-1}_i \sigma_{Y_{t, i}, \text{E}(\delta_i|Y_{t, i})} + \text{E}Y_{t, i}$. Here, the notation $\sigma_{Y_{t, i}, \text{E}(\delta_i|Y_{t, i})}$ denotes the covariance: $\text{E} \{ Y_{t, i} \text{E}(\delta_i|Y_{t, i}) \} - \text{E}Y_{t, i} \pi_i$. It is also easy to show---insofar as $E(\delta_i| y_i, t) > 0$ for all $y_i$ and $t$---that $f_{\delta}(y|t)$ is supported on the same set as $f(y_i|t)$.

For conciseness, we often denote $Y_{t, i} \in \zeta$ as $Y_{t, \delta_i}$ under the implicit assumption that $\delta_i = 1$. We also use $Y_{T_i} \in \zeta$ or $Y_{T, \delta_i}$ with the understanding that $T$ is fixed to whatever value it takes for unit $i \in I$. We now specify our short list of assumptions more formally, with a re-statement of C1-C3 for clarity.
\begin{enumerate}
\item[C1:] $Y_{t, \delta_i} = Y^t_{t, \delta_i}$ for all $t \in \mathcal{S}_T$ and $\delta_i$ (Consistency)
\item[C2:] Let $\mathbf{L}$ be a vector of random variables. Then $\text{E}Y_t = \text{E}Y^t$ or, conditional on $\mathbf{L}$, $\text{E}Y_{t, \mathbf{L}} = \text{E}Y_{\mathbf{L}}^t$ (Mean exchangeability)
\item[C3:] $0 < \text{Pr}(T_i = 1 | \mathbf{L}) < 1$ for $\forall i \in I$ (Positivity)
\item[C4:] The support of $Y_{T, \delta_i}$ and $Y^{T_i}$ are the same, i.e., for all $Y_{T_i} \in \zeta$, $\mathcal{S}_{Y_{T, \delta_i}} = \mathcal{S}_{Y^{T_i}}$. This can also be stated conditional on $\mathbf{L}$
\item[C5:] Let $Z_{\delta_i}$ be a random variable and say that $\mathcal{L}_n =(I, E_n)$ is an undirected graph with node set $I$ and link set $E_n$ s.t. a link $e_{i, j} \in E_n$ between two nodes $i, j \in I$ is present if and only if $\sigma_{Z_{\delta_i}, Z_{\delta_j}} \neq 0$. Then the mean degree of this graph $n^{-1} \sum_{i=1}^n \sum_{j =1 }^{n-1} 1_{e_{i, j} \in E_n} =\mu_n =o(n)$, where $1_{e_{i, j} \in E_n} = 0$ when $i=j$ by convention and each indicator variable is non-stochastic
\end{enumerate}
The usual provisos that referenced mathematical objects exist is mostly omitted. Reiterating the meaning of C4 is useful as a stepping stone to further consideration. Put succinctly, when C4 holds, it means that the counterfactual distribution and conditional distribution possess the same support, either conditional on some $\mathbf{L} = \mathbf{l}$ or marginally. Rejecting this notion is equivalent to positing that certain values in the support of $Y^T$ ($Y_{\mathbf{l}}^T$) can never materialize with $Y_T$ ($Y_{T, \mathbf{l}}$). This is a strong assertion with non-trivial epistemic consequences, especially in the context of non-informative sampling. If it is believed that C4 cannot be obtained, then those values that exist in the counterfactual support alone have no real world meaning. In this circumstance, we can simply condition on those that can materialize at no empirical loss. 

Notably, sufficient conditions for C4 to hold are the existence of a possibly unmeasured and unknown composite confounder $U$ s.t. $Y^t$ is conditionally independent of $T$ provided $U$, C1 is also true, and the stronger form of sampling positivity holds. These conditions are articulated without further conditioning on $\mathbf{L}$ at no loss of generality. For demonstrative purposes, we prove that these statements imply C4 informally for the case s.t. $U$ is absolutely continuous, also at no loss. To this end, observe the following identity under the first stated premise that $U$ and the referenced densities exist:
$$f(y^{t}) = \int_{\mathcal{S}_U} f(y^{t} | t, u) f(u) du = \int_{\mathcal{S}_U} f(y | t, u) f(u) du$$
Now, note that the following statement is also true:
$$f(y|t) = \int_{\mathcal{S}_U} f(y| t, u) f(u | t)du$$
Since both $f(y^t)$ and $f(y|t)$ are functions of $y$ alone and otherwise share in $f(y| t, u)$ as a basis, it is then implied that $f(y^t)$ and $f(y|t)$ are strictly positive on the same set of values. Furthermore, since $E(\delta | y, t) > 0$ implies that $f_{\delta}(y|t) > 0$ on the same set of $y$ values s.t. $f(y|t) > 0$, by transitivity, $f_{\delta}(y|t)$ also shares the same support with $f(y^t)$. This supplies C4. The reason we chose to treat C4 as an assumption, however, is because doing so does not require the existence of a random variable $U$ with the stated properties. It is therefore feasible to achieve C4 in more general circumstances. Nevertheless, the conditions that imply C4 are also relatively mild and employed on a regular basis. For instance, every condition except one---the stronger form of sampling positivity---is required by methodologies that estimate expected causal effects via C1-C3. The stronger form of sampling positivity is often assumed implicitly. As a consequence, if one believes that it is even possible to try and identify the expected causal effect, then the functional average treatment effect is already identified in many circumstances.

The last assumption, C5, is required for establishing statistical consistency. Results are often proven under the premises of mutual independence and non-informative sampling. This restricts their utility, especially since many modern research settings depend upon non-probability samples of outcome variables that partake in complicated and unknown systems of possibly 'long-range' probabilistic dependence. Furthermore, this is also restricting since informative sampling can induce statistical dependencies. By proving our results under more general conditions, we expand their reliability into these contexts. C5 essentially asserts that the mean number of outcome variables in a sample that a typical one is correlated with is sub-linear in $n$, i.e., that $n^{-1} \mu_n \to 0$ as $n \to \infty$. Note that this is a very mild assumption since it still allows for the mean number of statistical dependencies present in the sample to diverge with sample size. It places no additional constraint on the exact form of the probabilistic dependencies. We also reference an alternative: C5'. This assumption is exactly the same, except it makes use of a dependence graph s.t. a link exists between two nodes if and only if their corresponding outcome variables are statistically dependent.

Our first proposition establishes functional average exchangeability. Although it is trivial mathematically, it provides a useful foundation. 
\begin{proposition}{(\textbf{Functional Average Exchangeability})}
Suppose C1 and C4. Then $\text{Av}(Y_{T, \delta}) = \text{Av}(Y^{T})$.
\end{proposition}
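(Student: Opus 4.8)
The plan is to reduce the statement to the observation that the functional average $\text{Av}(\cdot)$ is a deterministic functional of the \emph{support set alone}, and then to use C1 and C4 to argue that the two relevant supports coincide. Concretely, from the definitions in Section 2, for a continuous variable $Y$ with support $\mathcal{S}$ we have $\text{Av}(Y) = R^{-1}\int_{\mathbb{R}} y\,1_{y\in\mathcal{S}}\,dy$ with $R = \int_{\mathbb{R}} 1_{y\in\mathcal{S}}\,dy$ (and the analogous normalized sum in the discrete case); neither expression involves the density or mass function of $Y$. Hence the first step is simply to record the elementary fact that if two random variables have the same support, they have the same functional average.

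The second step is to identify the two supports. By consistency (C1), $Y_{T,\delta} = Y^{T}_{T,\delta}$, so the observed sampled outcome is literally the counterfactual outcome restricted to the complete-case sample $\zeta$; in particular $\mathcal{S}_{Y_{T,\delta}} = \mathcal{S}_{Y^{T}_{T,\delta}}$ and $\text{Av}(Y_{T,\delta}) = \text{Av}(Y^{T}_{T,\delta})$. It is also worth recalling the earlier remark that, because $\text{E}(\delta_i \mid y_i,t) > 0$ for all $y_i$, the sampling-weighted law $f_{\delta}(\cdot\mid t)$ is supported on exactly the set where $f(\cdot\mid t) > 0$, so passing to $\zeta$ does not shrink the support. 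Then C4 asserts precisely that $\mathcal{S}_{Y_{T,\delta}} = \mathcal{S}_{Y^{T}}$. Chaining these equalities of supports with the first step gives $\text{Av}(Y_{T,\delta}) = \text{Av}(Y^{T})$.

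As the authors themselves note, there is no real mathematical obstacle; the one point that deserves explicit attention is \emph{why} $\text{Av}$ ignores the probability law, since this is the entire conceptual payload of the proposition — it is what renders confounding and informative sampling immaterial. I would therefore make the argument say out loud that the only channel through which a distribution can influence $\text{Av}(Y)$ is through $\mathcal{S}$, so any two distributions (counterfactual versus observed-and-sampled) that share a support are interchangeable for this estimand, with C1 and C4 supplying that shared support. I would also remark, as in the text, that the continuous and discrete cases are handled identically and work WLOG in the continuous case.
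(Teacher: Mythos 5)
Your proposal is correct and follows essentially the same route as the paper: both arguments observe that $\text{Av}(\cdot)$ depends only on the support, invoke C1 and C4 to identify $\mathcal{S}_{Y_{T,\delta}}$ with $\mathcal{S}_{Y^{T}}$, and conclude by equality of the defining integrals, WLOG in the continuous case. The additional commentary on why the probability law is irrelevant is expository rather than a different method.
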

\begin{proof}
The result follows directly from the premises. Since $\mathcal{S}_{Y_{T, \delta}} = \mathcal{S}_{Y^T}$, $\int_{\mathcal{S}_{Y_{T,\delta}}} 1 \cdot dy = \int_{\mathcal{S}_{Y^T}} 1 \cdot dy = R$ WLOG for the continuous case and $\int_{\mathcal{S}_{Y_{T,\delta}}} y dy = \int_{\mathcal{S}_{Y^T}} y dy$.
\end{proof}
Proposition 1 is extendable to $\text{Av}_{\mathbf{x}}(Y^T)$ or to the conditional case s.t. there exists some vector $\mathbf{L}$ where $\text{Av}(Y_{T, \mathbf{L}, \delta}) = \text{Av}(Y_{\mathbf{L}}^{T})$. However, again, this is omitted.

Before continuing, a common caveat is due. Identifying a counterfactual parameter statistically is not equivalent to the identification of a causal one. Causal relationships cannot be inferred from statistical relationships alone \cite{pearl2003statistics}. A theory of causation---perhaps represented by a structural causal model---is therefore still required if causal meanings are to be supplied to the functional average \cite{pearl2010causal, hernan2010causal}. These results only simplify this process for a small set of related parameters by removing the strict need to measure a proper set of adjusting variables in some circumstances. In other words, if C1 and C4 hold, then, provided a structural causal model, the functional average effect can be identified even without accounting for unmeasured confounders.  

\subsection{The Problem of Estimation}
The simplicity of Proposition 1 and the relative mildness of C4 unfortunately coexist with the difficulty of estimating $\text{Av}(Y_{T, \delta})$. Here, the `no is no free lunch' addage returns. A theoretical estimator can be constructed, nevertheless, using the following two identities. We tacitly condition on $1_{\mathcal{S}_Y}$ for ease of reading: $\text{E}\{ f^{-1}(Y)Y \} = \int_{\mathcal{S}} y dy$ and $\text{E}\{ f^{-1}(Y) \} = R$. This naturally suggests an estimator of the following form:
\begin{equation}
\tilde{Av}(Y_{\delta}) = \{ \sum^n_{i=1} f_{\delta_i}^{-1}(Y_{\delta_i})\}^{-1} \sum^n_{i=1} f_{\delta_i}^{-1}(Y_{\delta_i}) Y_{\delta_i}
\end{equation}
In this section, we investigate some of the features of plug-in estimators for eq. (2.1). After exploring the discrete case, we offer brief commentary on the difficulties of the continuous one. Then we re-visit the sample mid-range estimator. After completing these explorations, we introduce a bootstrapping strategy for conducting inference.

\subsubsection{Discrete Estimators of $\tilde{Av}(Y_{\delta})$}
When $Y_{\delta_i}$ is discrete and $\hat{f}_{n} (y) = n^{-1} \sum_i^n 1_{Y_{\delta_i} = y} = n^{-1} M_y$, the empirical plug-in for eq. (2.1) reduces to an intuitive estimator. Say $1_{y \in \mathcal{S}_{\zeta}}$ is an indicator that a value $y \in \mathcal{S}_Y$ is observed and therefore in the support of the empirical distribution: $\mathcal{S}_{\zeta}$. Then the empirical plug-in for eq. (2.1) reduces to $\hat{Av}(Y_{\delta}) = \{ \sum_{y \in \mathcal{S}_Y} 1_{y \in \mathcal{S}_{\zeta}} \}^{-1} \sum_{y \in \mathcal{S}_Y} 1_{y \in \mathcal{S}_{\zeta}} y$ under the convention that $\hat{f}^{-1}_n (y) =0$ when $y \notin \mathcal{S}_{\zeta}$. To see this, observe that for an arbitrary set of materialized sample values in $\zeta$, where $\zeta$ is temporarily treated as a set of constants, $\sum_{y \in \zeta} \hat{f}_n^{-1} (y) = n\cdot |\mathcal{S}_{\zeta}|$ and $\sum_{y \in \zeta} \hat{f}_n^{-1} (y) y = n \cdot \sum_{y \in \mathcal{S}_{\zeta}} y$. Therefore, the discrete plug-in for eq. (2.1) is simply the arithmetic average of the unique values observed in the sample.

We now establish the statistical consistency of this plug-in under general dependency conditions. For the rest of this section, we omit notation for $\delta$ with the understanding that it is implicit whenever we are dealing with sampled outcomes. To this end, note that $\text{Pr}(Y_1 \neq y, Y_2 \neq y, \ldots, Y_n \neq y) = \text{Pr}(Y_n \neq y | Y_{n-1} \neq y, \ldots, Y_1 \neq y) \cdot \text{Pr}(Y_{n-1} \neq y | Y_{n-2} \neq y, \ldots, Y_1 \neq y) \cdots \text{Pr}(Y_1 \neq y)$ and define a corresponding sequence $\mathcal{F} =(\text{Pr}(Y_i \neq y| \mathcal{A}_i))_{i \in I}$ under the convention that $\text{Pr}(Y_1 \neq y| \mathcal{A}_1) = \text{Pr}(Y_1 \neq y)$.
\begin{proposition}
Suppose a sample $\zeta = \{ Y_i \}_{i \in I}$. Observe $\mathcal{F} =(\text{Pr}(Y_i \neq y| \mathcal{A}_i))_{i \in I}$ as previously defined and say $k(n) =|\{ s \in \mathcal{F} | s < 1 \}|$, where $s \in \mathcal{F}$ indicates here that $s$ is present in the sequence. If $k(n) \to \infty$ as $n \to \infty$, then $\hat{\text{Av}}(Y) \overset{a.s.}{\to} \text{Av}(Y)$ as $n \to \infty$, where $\overset{a.s.}{\to}$ denotes almost sure convergence.
\end{proposition}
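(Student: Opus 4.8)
The plan is to prove something slightly stronger than the stated convergence, namely that almost surely $\hat{\text{Av}}(Y)$ is eventually \emph{equal} to $\text{Av}(Y)$. Recall from the discussion preceding the statement that the discrete plug-in is exactly the arithmetic mean of the distinct values appearing in the sample, so $\hat{\text{Av}}(Y) = \text{Av}(Y)$ as soon as every element of the (finite) support $\mathcal{S}_Y$ has been observed at least once. Thus it suffices to show that, for each fixed $y \in \mathcal{S}_Y$, the event $B_y = \bigcap_{i=1}^\infty \{Y_i \neq y\}$ is null. A union bound over the finitely many $y \in \mathcal{S}_Y$ then gives that almost surely every value is observed by some finite (random) index $N$; for $n \geq N$ we have $\mathcal{S}_\zeta = \mathcal{S}_Y$ and hence $\hat{\text{Av}}(Y) = \text{Av}(Y)$, which in particular yields almost sure convergence.

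To control $\text{Pr}(B_y)$ I would use the chain-rule factorization already written down just before the statement: for every $n$,
\[
\text{Pr}\Big(\bigcap_{i=1}^n \{Y_i \neq y\}\Big) = \prod_{i=1}^n \text{Pr}(Y_i \neq y \mid \mathcal{A}_i),
\]
where $\mathcal{A}_i = \{Y_{i-1} \neq y, \ldots, Y_1 \neq y\}$ and $\mathcal{A}_1$ is trivial. Factors equal to $1$ leave the product unchanged, but by hypothesis the number $k(n)$ of factors strictly less than $1$ diverges. If these sub-unit factors are bounded above by some $1 - \epsilon$ with $\epsilon > 0$ — equivalently, if the positive conditional probabilities $\text{Pr}(Y_i = y \mid \mathcal{A}_i)$ are bounded away from $0$ — then $\text{Pr}\big(\bigcap_{i=1}^n \{Y_i \neq y\}\big) \leq (1-\epsilon)^{k(n)} \to 0$, and continuity of probability from above gives $\text{Pr}(B_y) = \lim_n \text{Pr}\big(\bigcap_{i=1}^n \{Y_i \neq y\}\big) = 0$. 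No independence, mixing, or C5-type condition is needed here: the chain rule absorbs arbitrary dependence in the sample, which is the point of the generality.

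The main obstacle is precisely the passage from ``infinitely many factors are $< 1$'' to ``the product tends to $0$''. This implication is not automatic — for instance $\prod_i (1 - i^{-2}) > 0$ — so the argument cannot rest on the divergence of $k(n)$ alone and the hypothesis as stated needs reinforcement. The clean sufficient condition is that $\sum_i \text{Pr}(Y_i = y \mid \mathcal{A}_i) = \infty$ (which is exactly what forces $\prod_i \text{Pr}(Y_i \neq y \mid \mathcal{A}_i) = 0$); the uniform-gap condition above is a convenient special case of it. I would therefore either state this quantitative requirement explicitly or derive such a lower bound from additional standing structure on the sampling mechanism, and then the remainder — the union bound over $\mathcal{S}_Y$ and the continuity-from-above step — is routine.
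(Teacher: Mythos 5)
Your proposal follows the same core route as the paper: factor $\text{Pr}(y \notin \mathcal{S}_{\zeta})$ by the chain rule into the conditional probabilities $\text{Pr}(Y_i \neq y \mid \mathcal{A}_i)$ and drive the product to zero using the divergence of $k(n)$, then pass from ``every support point is eventually observed'' to convergence of the plug-in. Two points of comparison are worth recording. First, your concluding step is cleaner than the paper's: you use the fact that the events $\{y \in \mathcal{S}_{\zeta_n}\}$ are increasing in $n$, so $\text{Pr}(y \notin \mathcal{S}_{\zeta_n}) \to 0$ together with continuity of probability and a union bound over the finite set $\mathcal{S}_Y$ gives that $\hat{\text{Av}}(Y)$ is almost surely \emph{eventually equal} to $\text{Av}(Y)$; the paper instead runs a Chebyshev-type bound on $\sup_{k>n}\lvert 1_{y\in\mathcal{S}_{\zeta_k}} - \text{Pr}(y\in\mathcal{S}_{\zeta_k})\rvert$, which is heavier and, as written, shaky (the variance bound is applied to the supremum as though it were attained at a single index). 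Second, and more importantly, the obstacle you flag is genuine and is present in the paper's own argument: the paper writes the maximum of the sub-unit factors as $\text{Pr}(Y_* \neq y) = 1-\epsilon$ and concludes $(1-\epsilon)^{k(n)} \to 0$, but that maximum is taken over a collection of numbers that grows with $n$, each merely strictly below one, so $\epsilon$ may depend on $n$ and shrink to zero, in which case the bound fails --- exactly your $\prod_i (1-i^{-2})>0$ example. The hypothesis as stated does need the reinforcement you propose (a uniform gap $\text{Pr}(Y_i \neq y \mid \mathcal{A}_i) \le 1-\epsilon$ on the sub-unit factors, or the conditional Borel--Cantelli condition $\sum_i \text{Pr}(Y_i = y \mid \mathcal{A}_i) = \infty$). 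Your version is the more rigorous of the two.
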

\begin{proof}
Let $y \in \mathcal{S}_Y$ be arbitrary and denote $\mathcal{S}_{\zeta} \subseteq \mathcal{S}_Y$ as the set of observed values. Then $\text{Pr}(y \in \mathcal{S}_{\zeta}) = 1- \text{Pr}(y \notin \mathcal{S}_{\zeta}) = 1- \text{Pr}(Y_1 \neq y, Y_2 \neq y, \ldots, Y_n \neq y) = 1-\text{Pr}(Y_n \neq y | Y_{n-1} \neq y, \ldots, Y_1 \neq y) \cdot \text{Pr}(Y_{n-1} \neq y | Y_{n-2} \neq y, \ldots, Y_1 \neq y) \cdots \text{Pr}(Y_1 \neq y)$. Now, suppose $k(n)$ probabilities in the sequence $\mathcal{F}$ are strictly less than one. Denote the maximum of these probabilities as $\text{Pr}(Y_* \neq y)$ and note that since $\text{Pr}(Y_* \neq y) < 1$, there exists some $\epsilon > 0$ s.t. $\text{Pr}(Y_* \neq y) = 1-\epsilon$. Then:
\begin{align*}
\text{Pr}(y \notin \mathcal{S}_{\zeta}) & = \prod_{i=1}^n \text{Pr}(Y_i \neq y | \mathcal{A}_i) \\
& \leq 1^{n-k(n)} \cdot \{ \text{Pr}(Y_* \neq y) \}^{k(n)} \\
& =  \{ 1-\epsilon \}^{k(n)}
\end{align*}
Hence:
$$0 \leq \lim_{n \to \infty} \text{Pr}(y \notin \mathcal{S}_{\zeta}) \leq \lim_{n \to \infty} \{ 1-\epsilon \}^{k(n)} =0 $$
This of course implies that $\text{Pr}(y \in \mathcal{S}_{\zeta}) \to 1$ as $n \to \infty$. Next, define an indicator variable $1_{y \in \mathcal{S}_\zeta}$ and also $Z_{k_*} = \underset{k > n}{\text{sup}}|1_{y \in \mathcal{S}_{\zeta_k}} - \text{Pr}(y \in \mathcal{S}_{\zeta_k})| = |1_{y \in \mathcal{S}_{\zeta_{k_*}}} - \text{Pr}(y \in \mathcal{S}_{\zeta_{k_*}})|$. Letting $\epsilon > 0$ be arbitrary again:
\begin{align*}
\text{Pr}(Z_{k_*} > \epsilon) \leq \epsilon^{-2} \{ \text{Pr}(y \in \mathcal{S}_{\zeta_{k_*}}) \cdot (1-\text{Pr}\{ y \in \mathcal{S}_{\zeta_{k_*}} \})  \}
\end{align*}
This then implies that:
$$\lim_{n \to \infty} \text{Pr}(Z_{k_*} > \epsilon) \leq \lim_{n \to \infty} \epsilon^{-2} \{ \text{Pr}(y \in \mathcal{S}_{\zeta_{k_*}}) \cdot (1-\text{Pr}\{ y \in \mathcal{S}_{\zeta_{k_*}} \})  \} =0  $$
Hence, $1_{y \in \mathcal{S}_{\zeta}} \overset{a.s.}{\to} 1$. Thereby, since $y$ was arbitrary, it is then implied that $\hat{\text{Av}}(Y) = \{ \sum_{y \in \mathcal{S}_Y} 1_{y \in \mathcal{S}_{\zeta}} \}^{-1} \sum_{y \in \mathcal{S}_Y}  1_{y \in \mathcal{S}_{\zeta}} y \overset{a.s.}{\to} R^{-1}  \sum_{y \in \mathcal{S}_Y} y = \text{Av}(Y)$ since $R$ is finite.
\end{proof}
The elementary nature of $\hat{\text{Av}}(Y)$ makes it a reliable estimator for relatively simple outcome variables. $\hat{\text{Av}}(Y)$ will converge almost surely at an unknown, but very fast rate in all likelihood, and even in the presence of stark probabilistic dependencies, when the scale of the outcome variable possesses a small number of unique values. This statement obviously applies to sample extremes in addition.

Unfortunately, however, quantifying the rate of convergence---or the uncertainty associated with finite sample estimates---is difficult. This is true even under mutual independence. To appreciate this, it is sufficient to observe $\sum_{y \in \mathcal{S}_Y} 1_{y \in \mathcal{S}_{\zeta}} y$. Since $\text{E} 1_{y \in \mathcal{S}_{\zeta}} = p_{y, n}$ is unknown, so is $\text{Var} \{ \sum_{y \in \mathcal{S}_Y} 1_{y \in \mathcal{S}_{\zeta}} y \} = \sum_{y \in \mathcal{S}_Y} p_{y, n} \cdot (1-p_{y, n}) \cdot y^2$. If $\zeta$ is a sample of identically distributed and mutually independent outcome variables, then we can attempt to estimate $p_{y, n}$ with $\hat{p}_{y, n} = 1-\{ 1-\hat{f}_n (y) \}^n$. However, $\hat{p}_{y, n} \equiv 0$ when $y \notin \mathcal{S}_{\zeta}$. Furthermore, when $y \notin \mathcal{S}_{\zeta}$, it is also unknown by definition. Hence, reasonably estimating  $\text{Var} \{ \sum_{y \in \mathcal{S}_Y} 1_{y \in \mathcal{S}_{\zeta}} y \}$ requires knowledge that makes estimating $\text{Av}(Y)$ arguably redundant.

A recourse to the central limit theorem is also unavailable. This is because $|\mathcal{S}_Y|$ is finite by construction. Therefore, $\hat{Av}(Y)$ will always be a finite sum of random variables. In some circumstances---such as when $|\mathcal{S}_Y|$ is reasonably large---a normal approximation might still function with an acceptable degree of accuracy. However, for reasons already explored, this strategy will still require a strong set of assumptions about sampling probabilities and potential values. 

\subsubsection{Continuous Outcomes}

Kernel density estimation is an intuitive choice to estimate eq. (2.1) for the continuous case. However, the properties of this plug-in are also largely intractable and unknown. For example, although the properties of a kernel density estimator $\hat{f}_n (y)$ are well-researched for a constant $y \in \mathcal{S}_Y$ \cite{hansen2008uniform, chen2017tutorial, zambom2013review}, the behavior of  $\hat{f}_n (Y)$, i.e., the random variable defined and evaluated on the same random outcome that was utilized to construct it, is not as well-studied. This is because kernel density estimation is often evaluated on a grid of deterministic points. Establishing the asymptotic properties of a statistic of the form $\{ \sum_{i=1}^n \hat{f}_n^{-1} (Y_i) \}^{-1} \sum_{i=1}^n \hat{f}_n^{-1} (Y_i) \}^{-1} Y_i $, where $\hat{f}_n^{-1} (Y_i) = nh \cdot  \{ \sum_{j=1}^n K\{h^{-1} (Y_i - Y_j) \} \}^{-1}$ for some $h > 0$ and kernel function $K(\cdot)$, although promising, is therefore also non-trivial. Such considerations also require a detailed consideration of possible kernel functions. Since---in general---we are interested in establishing statistical consistency under very general dependency conditions, we avoid this enterprise in this manuscript.

We also avoid other options for density estimation since they arrive with similar challenges, some as of yet undisclosed. For instance, estimators that use reciprocal estimated densities can possess unstable variances when the underlying distribution possesses a density that decays smoothly toward zero. Moreover, since each $\hat{f}_n (y)$ is typically a function of the entire sample, plug-ins for eq. (2.1) will necessarily possess a myriad of complex dependencies. This will prevent any elementary citation of a central limit theorem. Just as importantly, it will also limit the applicability of concentration inequalities for finite sample inference. Hence, although this is a promising area of research that demands attention, no further consideration is offered here.
\newline
\paragraph{\textit{Mid-range estimation}} For a large special class of bounded random variables, the mid-range is a simple alternative for estimating functional averages, including those from continuous distributions. Let $Y_{(i)}$ for $i \in I_n = \{1, \ldots, n \}$ denote the $i$th order statistic of a sample s.t. $Y_{(1)} \leq Y_{(2)} \leq \cdots \leq Y_{(n)}$. The mid-range, $\hat{MR}\{ Y \}$, or simply $\hat{MR}$ when convenient, is defined as follows: $\hat{MR} \{ Y \} = 2^{-1} \{ Y_{(1)} + Y_{(n)} \}$.

Naturally, the sample mid-range estimates the population mid-range $MR=2^{-1}(m + M)$. Linear combinations of order statistics are also well-studied \cite{chernoff1967asymptotic, hosking1990moments, bickel1973some, david2004order}. However, the sample mid-range is often ignored, and especially in applied settings, because of its possible inefficiency and since its distribution also admits no closed-form expression in a majority of settings. Before offering an exposition on some of its properties, we offer a useful definition, which highlights our interest in it. We say a random variable is \textit{regular} when it is supported on a single interval of real numbers or a complete subset of integers. This definition is helpful because $\text{Av}(Y) = MR \{Y \}$ when $Y$ is a regular random variable. 
\begin{definition}
A random variable will be said to be regular if and only if its support $\mathcal{S}$ is a single interval of real numbers or a complete subset of integers starting at some $m \in \mathbb{N}$ and ending with a maximum integer $M$ s.t. if integer $c \in \mathcal{S} \setminus{M}$, then $c+1 \in \mathcal{S}$.
\end{definition}
$\hat{MR}$ is a statistically consistent estimator of $MR$ for outcome variables with finite support under the assumption of mutual independence. Barndorff-Nielsen \cite{barndorff1963limit} established sufficient and necessary conditions for the statistical consistency of extreme order statistics. Almost sure convergence, and therefore also convergence in probability ($\overset{p}{\to}$), of an extreme order statistic to its asymptotic target is trivially fulfilled when there exists a $y \in \mathcal{S}_Y$ s.t. $F(y)=1$ and $F(y - \epsilon) < 1$ for all $\epsilon > 0$. Hence, $\hat{MR}$ also converges almost surely to its population value for all bounded distributions under this setup. Sparkes and Zhang \cite{u2023properties} extended this result to much more general scenarios of statistical dependence. If we define a sequence of conditional cumulative distribution functions (CDFs) $\mathcal{F}$ in the same spirit as Proposition 2, it can be demonstrated that extreme order statistics converge in probability to their target values for bounded random variables insofar as the number of conditional CDFs in $\mathcal{F}$ that is strictly less than unity diverges as $n$ becomes arbitrarily large. This is once again a very mild assumption since the dependencies involved can induce arbitrary changes in the behaviors of the distribution functions otherwise. Since the random variables considered here are bounded, convergence in probability of the sample extremes also implies their almost sure convergence.

Nevertheless, as previously mentioned, when the distribution of $Y$ is unknown, no reliable expression for the distribution of $\hat{MR}$ is accessible to use for inference: a situation that is analogous to the discrete plug-in estimator for eq. (2.1). Although extreme value theory helps to address this issue under the assumption of an independent and identically distributed sample or a stationary sequence of outcome variables \cite{leadbetter1988extremal}, it is insufficient without additional parametric constraints. For instance, classical results establish, for a suitable sequence of constants $a_n$ and $b_n$, that $a^{-1}_n \{ Y_{(n)} - b_n \}$ converges weakly to one of three distributions under certain regularity conditions \cite{leadbetter1988extremal, smith1990extreme, haan2006extreme, kotz2000extreme}. These are the Gumbel (type I), Fréchet (type II), and reverse Weibull (type III) distributions. These results are also sufficient for reasoning about the sample minimum since $Y_{(1)}=-\underset{i\in I_n}{\text{max}}(-Y_i)$.  Bingham \cite{bingham1995sample, bingham1996sample} uses the convergence of  $a^{-1}_n \{ Y_{(n)} - b_n \}$ to a type I or III extreme value distribution and the asymptotic independence between $Y_{(1)}$ and $Y_{(n)}$ to derive limiting distributions of $\hat{MR}$ when its underlying distribution function is also symmetric. However, the expressions derived for these asymptotic distributions ultimately depend upon unknown normalizing constants that are specific to the marginal distribution function of the sampled outcomes. Broffitt \cite{broffitt1974example} and Arce and Fontana \cite{arce1988midrange} provide similar explorations for $\hat{MR}$ under the auspices that $\zeta$ is an identical and independent sample from a symmetric power law distribution. Under this constraint, the limiting variance of $\hat{MR}$ is derived as $\{12 \cdot a^2 \cdot  \{ \log{(n)} \}^{2(1-1/a)} \}^{-1} \cdot \pi^2$ for some distribution-specific $a > 0$, for instance. Unlike the sample mean under these same conditions, though, an expression for a governing probability law that does not depend on the marginal distribution functions in question is again unavailable, even asymptotically. This situation extends to the asymmetric and non-independent cases, which are even more poorly studied. 

Bootstrapping is a feasible option for inference, provided these challenges. However, it is also not without problems. Traditional bootstraps condition on the observed values of $\zeta$ and use the strong consistency of the empirical CDF $\hat{F}_n(x)$ to emulate the sampling distribution of a statistic of interest via a re-sampling procedure \cite{efron1994introduction}. They require that the statistic of interest, say $T_0(Y_1, Y_2, \ldots, Y_n)$, is a well-behaved functional of the marginal CDF and that the targeted parameter is not a boundary value of the support \cite{bickel1981some}. Overall, bootstrapping processes usually behave as intended under the same set of conditions that supply a central limit theorem. For these reasons, traditional bootstraps are problematic for functions of extreme order statistics. The $m$-out-of-$n$ bootstrap, however, has proven to be an effective procedure in this domain \cite{bickel2008choice}.  Essentially, a basic $m$-out-of-$n$ bootstrapping process re-samples $m$ observations from $\zeta$ with or without replacement s.t. $n^{-1}m\to 0$ as $m \to \infty$. It can provide approximately valid inference when traditional methods fail. See Bickel and Ren \cite{bickel2001bootstrap}, Swanepoel \cite{swanepoel1986note}, Beran and Ducharme \cite{beran1991asympotic}, or Politis, Romano, and Wolf \cite{politis2001asymptotic} for additional background and resources on the topic. Pertinently, the $m$-out-of-$n$ bootstrap is also capable of handling situations with dependent observations insofar as an appropriate sub-sampling strategy is used.

Nevertheless, the $m$-out-of-$n$ bootstrap (and other forms of bootstraps for dependent observations) is still insufficient for the context and conditions of this paper. Three reasons substantiate this claim. Firstly, we require a version of the bootstrap that is capable of reliably capturing $\theta$ under fairly general but unknowable dependency conditions. This rules out approaches such as the $m$-out-of-$n$ bootstrap, or bootstrapping processes such as the block bootstrap, which require a re-sampling theory that corresponds adequately to the unknown dependency structure, and which typically exclude the existence of long-range dependencies \cite{ shao2010dependent, hall1995blocking, kreiss2011bootstrap, lahiri2003resampling}. 

Secondly, we are interested in reasoning about $\theta = \text{Av}(Y)$ and not $E\{ \hat{MR}\}$ for a particular $n$. In most circumstances where $\hat{MR}$ will be used, i.e., those circumstances s.t. the marginal distributions are not symmetric, it will be a biased estimator \cite{arce1988midrange}. It is likely that $\theta$ rests on the boundary of the support of $\hat{MR}$ in these circumstances. Convergence to $\theta$ might be slow and characterized by an unknown rate in addition \cite{broffitt1974example}. Consequently, any inferential procedure for $\hat{MR}$ must be flexible enough to provide cogent statements about $\theta$---and not simply about $\text{E} \{\hat{MR} \}$ at a particular value of $n$---and even when sample sizes are modest. This necessitates conservative approaches for inference that allow for $\theta$ to sit outside of the empirical distribution of the bootstrapped statistics. It also therefore rules out popular bootstrapping methodologies, which construct confidence sets that are subsets of this observed range. Lastly, we wish to use a bootstrapping strategy that does not rely on the assumption that $T_0$ is a smooth functional of $F(x)$.

Further work to produce more efficient closed-form approximations is of course a preferable route. Since the mid-range is more efficient than the sample mean when non-negligible probability rests in the extremes of the support \cite{rider1957midrange}, it can provide a more efficient estimator of expected causal effects in many circumstances: a fact that is often neglected. Overall, however, due to the complicated probabilistic character of order statistics, this is a onerous road that possesses no immediate destination, especially when outcome variables are dependent and their joint distribution is unknown. This ultimately necessitates a different type of bootstrapping strategy.
\subsubsection{The Hoeffding Bootstrap}
With these prior facts in mind, we offer two limited, but related solutions, although only the first is discussed in this section. In summary, we assert that the bootstrap can be re-purposed to construct conservative confidence sets under fairly general conditions of statistical dependence and under milder regularity conditions. Notably, this re-purposed bootstrap, which we call the Hoeffding bootstrap, can be applied to all functional average estimators previously explored.

We now provide a synopsis of the first approach. Further details and the proof are provided in the supplementary materials. Essentially, we show that (1) if a statistician does not condition on the observed values of $\zeta$ and treats each re-sampled $Y_i$ as random, (2) if the maximum order statistic of the bootstrapped statistics is a discrete or an absolutely continuous random variable (at least asymptotically), and (3) if the outcome variables being re-sampled are not monotonic transformations of one another, say, or they do not partake in other forms of truly extreme statistical dependence, then the estimator for the range of the bootstrapped statistics is a statistically consistent estimator of a value that is greater than or equal to the range of $T_0(Y_1, Y_2, \ldots, Y_n)$ as $n$ and the number of bootstraps become arbitrarily large. Call this estimator $\hat{M} - \hat{m}$ in relation to $M$ and $m$, which now designate the maximum and minimum of the support of the bootstrap distribution. Insofar as $T_0(Y_1, Y_2, \ldots, Y_n)$ has finite support, the estimator of the bootstrap range can then be used in conjunction with Hoeffding's inequality to produce large-sample confidence intervals for $\theta$ with at least $1-\alpha$ coverage of the form $T_0(Y_1, Y_2, \ldots, Y_n) \pm \{ \hat{M} - \hat{m}\} \sqrt{2^{-1} \text{log}(2/\alpha)}$. The performance of confidence sets constructed with this strategy are evaluated in Section 4 and also in the supplementary materials. For clarity, we provide a schematic of the process below:
\begin{enumerate}
\item[I.] Acquire a sample of random variables $\zeta = \{ Y_i \}_{i \in I_n}$ and compute a statistic $T_0(Y_1, \ldots, Y_n)$
\item[II.] Draw $m \leq n$ random variables from $\zeta$ with or without replacement via a simple random sample or a theoretically guided process that attempts to reproduce a dependency structure. Compute the new statistic $T_1$ from these variables
\item[III.] Repeat I. and II. $K(n) -1$ times, where $K(n)=K$ is reasonably large, and construct $\{ T_k \}_{k \in \mathcal{K}}$ for $\mathcal{K} = \{0, 1, 2, \ldots, K \}$
\item[IV.] Set $\hat{M} - \hat{m} = \underset{k \in \mathcal{K}}{\text{max}}(T_k) - \underset{k \in \mathcal{K}}{\text{min}}(T_k)$
\item[V.] Construct an estimate of an at least $1-\alpha$ confidence set with $T_0 \pm \{ \hat{M} - \hat{m} \} \sqrt{2^{-1} \text{log}(2/\alpha)}$
\end{enumerate}
Requiring the maximum order statistic of the bootstrap sample to possess a density when it is continuous is non-trivial and might limit the applicability of the approach. However, the stipulation can be feasibly checked by observing the histogram of the bootstrap distribution. If it possesses a smooth shape without too many jagged breaks in its continuity across the x-axis, this is at least a good sign. Nevertheless, this limitation is addressed in Section 3. Although the provisional solution offered there does not strictly require $\mathcal{U}$ concepts, which we also introduce in Section 3, they provide clarity on the topic. Essentially, we show that it is still probably safe to use a slightly more conservative form of the same confidence set when a density does not exist, or even when the sample bootstrap range is not a statistically consistent estimator of $M-m$.

Moreover, even if the conditions that validate this approach are not met, it is apropos to state that the Hoeffding bootstrap will always perform better than strategies that use bootstrapped $t$-statistics or bootstrapped normal approximations. This fact essentially flows from Popoviciu's inequality, which states that $\text{Var}(T_0) \leq 4^{-1}(M_0 - m_0)^2$ when $T_0$ is bounded. This inequality also applies to empirical distributions. For instance, observe when $\alpha = .05$. Then $1.96 \cdot S_{T_k} \leq (\hat{M} - \hat{m}) \leq \hat{M} - \hat{m} \cdot \sqrt{2^{-1}\text{log}(2/\alpha)} \approx 1.35 \cdot (\hat{M} - \hat{m})$, where $S_k$ is the sample standard deviation of the bootstrap distribution.

This last fact partially motivates the use of Hoeffding's inequality. Note that if $m \leq T_0 \leq M$, it is already implied that $\text{E}T_0 \in [m, M]$ and hence that, for some sufficiently large $n$ and $K$, $\text{E}T_0 \in [\hat{m}, \hat{M}]$ with probability that is approximately one. However, since we actually want to capture $\theta$ and, in practice, we often use only moderately sized $K$ with moderate $n$, Hoeffding's inequality supplies an intuitive and well-established interval that already arrives with a penalty that is adjusted by $\alpha$. Insofar as $T_0 \overset{a.s.}{\to} \theta$ as $n \to \infty$, using Hoeffding's inequality as a penalty also asymptotically guarantees at least $1-\alpha$ coverage for $\theta$, as previously mentioned. A method for constructing confidence intervals such as this, although conservative, avoids dependency modeling and applies to a much larger class of statistics.

\section{$\mathcal{U}$ Random Variables and Counterfactual Linear Regression}
In this section, we discuss a class of random variables---the $\mathcal{U}$ class---that can help us avoid the difficulties encountered in Section 2. Recall: although we established that functional average causal effects can be identified and statistically consistently estimated under very mild assumptions---and even without adjusting for confounders---establishing efficient methods of statistical inference for these estimators is challenging. Ultimately, this is because the plug-in estimators defined and the sample mid-range possess largely intractable properties, and even asymptotically, in the absence of additional constraints that are in all likelihood inappropriate for applied settings. These difficulties are removed when working with $\mathcal{U}$ random variables as outcomes since they ultimately allow for the functional average to be estimated by standard additive statistics with well-known properties. We also show that $\mathcal{U}$ random variables are important because they can imbue basic linear regressions and analyses of variance with counterfactual---and thus possibly causal---interpretations under conditions traditionally assumed for estimating associations. On a similar note, we also prove that properties of $\mathcal{U}$ variables can be used to establish sufficient conditions for mean exchangeability and that they can be used to defend an extension of the Hoeffding bootstrap. First, however, a definition of a $\mathcal{U}$ random variable is helpful. We assume that all integrals and mathematical objects exist when referenced, as per usual.
\begin{definition}
Let $g$ be a measurable function. A random variable $g(Y)$ will be said to be in the class of $\mathcal{U}$ random variables if and only if $\text{E} \{ g(Y) \} = \text{Av}\{ g(Y) \}$. Similarly, the same will be said w.r.t. $\mathbf{X} \in \mathbb{R}^k$ for $Y=g(X_1, \ldots, X_k)$ if and only if $\text{E}Y = \text{Av}_{\mathbf{x}}(Y)$.
\end{definition}
Definition 2 stipulates that a random variable is $\mathcal{U}$ class w.r.t. some space when its expected value is equal to its average functional value in that space. Stated in a probabilistic fashion, a variable $Y$ is $\mathcal{U}$ class if one can take its expectation w.r.t. a uniform measure without changing its value. This type of random variable is ubiquitous in practice. A host of their properties has been investigated elsewhere \cite{u2023properties}. To familiarize the reader, we provide a list of important ones in \textbf{Table 1}. Essentially, $\mathcal{U}$ variables are closely related in concept to sum-symmetry of the CDF and structured but uncorrelated deviations from uniformity. All bounded and symmetric random variables are in the $\mathcal{U}$ class, for instance, although symmetry is not a necessary condition. Continuous and regular random variables with densities that are proportional to their standard deviation behave more and more like $\mathcal{U}$ random variables as $n \to \infty$ if their variance tends to zero. In an abuse of notation, we will say $Y \in \mathcal{U}$ if $Y$ is in this class of random variable.
\begin{table}[H]
\centering
\caption{Basic Properties of $\mathcal{U}$ Variables}
\begin{tabularx}{\textwidth}{p{5cm}|c| p{4cm}}
\hline
Property & Variable Type & Conditions and Definitions \\
\hline
$\sigma_{Y, f^{-1}(Y)} = 0$ & A & $Y \sim f(y)$ \\
$Y \in \mathcal{U} \implies cY \in \mathcal{U}$ and $Y + c \in \mathcal{U}$ & A & $c \in \mathbb{R}$ \\
$\text{E}\{f(Y)\} = R^{-1}$ implies $Y \in \mathcal{U}$ & A & --- \\
$Y \in \mathcal{U}$ is equivalent to $F(Y) \in \mathcal{U}$ & R, C & $F(y)$ is CDF of $Y$ \\
$\int_{m}^M F(y) dy = \int_m^M S(y) dy$ & R, C, D & $S(y) =  1-F(y)$, $\mathcal{S}_Y = [m, M]$ \\
$U = Y + \epsilon$ s.t. $\text{E}(\epsilon | Y) = 0$ & R, C & $U \sim Unif(m, M)$, $f(y) \leq R^{-1}$ in left tail and unimodal \\
$\text{Pr}(|S_n - \text{E}S_n| > \epsilon) \leq 2 \text{exp} \{ - \{ \sum_{i=1}^n R_i^2 \}^{-1} 6 \epsilon^2  \}$ & R, C & $S_n = \sum_{i=1}^n Y_i$, $Y_i \in \mathcal{U}$, $C^*$ \\
$\sum_{i =1}^M F(i) = \text{E}Y$ & R, D & $\mathcal{S}_Y = \{1, 2, \ldots, M \}$ \\
$\sum_{i =1}^M F(i) - \sum_{i =1}^M S(i) =1$ & R, D &  $\mathcal{S}_Y = \{1, 2, \ldots, M \}$ \\
\hline
\end{tabularx}
\caption*{\small A = All, R = Regular, C = Continuous, D = Discrete, $C^* = \text{max}\{ \text{E} (\text{exp} \{s S_n \} ), \text{E} (\text{exp} \{-s S_n \}) \} \leq \text{Av}_{\mathbf{y}} ( \text{exp} \{s S_n \} ), s >0$}
\end{table}
Out of these properties, we draw special attention to the concentration inequality: $\text{Pr}(|S_n - \text{E}S_n| > \epsilon) \leq 2 \text{exp} \{ - \{ \sum_{i=1}^n R_i^2 \}^{-1} 6 \epsilon^2  \}$. The condition detailed in the table footnote is very mild and does not require independence. In fact, it can be true even when every single outcome variable in a sample is statistically dependent, insofar as the average correlation between those variables is mild, or $\mu_n$ is bounded if this is not the case. Discussion on this assumption is also available elsewhere \cite{u2023properties}.  Put succinctly, a researcher can expect it to be fulfilled if each $Y_i$ is symmetric---or at least relatively symmetric---and the joint distribution of the sample is biased away from $n$-tuples in the joint support that inflate $S_n$. We note that this is useful since these conditions often apply to the error distributions of statistics of interest, including those of linear regressions. Moreover, if $S_n$ converges in probability to $\text{E}S_n$ as $n \to \infty$, this is also supportive of the notion that the condition is fulfilled for sufficient sample sizes. A proof that $S_n$ converges almost surely to $\text{E}S_n$ under our conditions is provided in the supplementary material.

Next, we introduce two simple propositions with direct practical or theoretical interest for causal inference. Proposition 3 follows directly from our main conditions and solves the problem of estimating $\text{Av}(Y_{\delta, t})$ for variables in the $\mathcal{U}$ class since it allows for the replacement of the estimators of the previous section with the sample mean. Proposition 4 establishes an interesting sufficient condition for mean exchangeability. We only prove these statements for functional averages w.r.t. the range of $Y$. This is for conciseness. Note that all results are easily extended to the excluded case.

Finally, observe that, although the notation was omitted, the following results also apply when the random variables are conditioned upon another vector of random variables $\mathbf{L}$, perhaps to facilitate the fulfillment of C4 or $\mathcal{U}$ status. In this case, we would also assume C3, although this will also be left unmentioned.
\begin{proposition}
Suppose C1 and C4. If $Y_{\delta, t} \in \mathcal{U}$, then $\text{E}Y_{\delta, t} = \text{Av}(Y^t)$.
\end{proposition}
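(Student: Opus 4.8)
The plan is to obtain the conclusion by composing two facts already in hand: the defining identity of a $\mathcal{U}$ random variable and functional average exchangeability (Proposition 1). Since $Y_{\delta,t}\in\mathcal{U}$, Definition 2 gives immediately that $\text{E}Y_{\delta,t}=\text{Av}(Y_{\delta,t})$; that is, taking the expectation against the baseline (sampling) measure returns the same number as averaging against the uniform measure on the support $\mathcal{S}_{Y_{\delta,t}}$. The only remaining task is then to identify $\text{Av}(Y_{\delta,t})$ with $\text{Av}(Y^t)$.

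For that step I would invoke Proposition 1 with the treatment coordinate fixed at the value $t$ of interest. Conditions C1 and C4 are exactly the hypotheses of that proposition (read with $T\equiv t$), so the conclusion $\text{Av}(Y_{T,\delta})=\text{Av}(Y^{T})$ specializes to $\text{Av}(Y_{\delta,t})=\text{Av}(Y^t)$. Spelled out, C4 gives $\mathcal{S}_{Y_{\delta,t}}=\mathcal{S}_{Y^t}$, hence $R=\int_{\mathcal{S}_{Y_{\delta,t}}}1\cdot dy=\int_{\mathcal{S}_{Y^t}}1\cdot dy$ and $\int_{\mathcal{S}_{Y_{\delta,t}}}y\,dy=\int_{\mathcal{S}_{Y^t}}y\,dy$ (with the analogous sums in the discrete case), which is precisely $\text{Av}(Y_{\delta,t})=\text{Av}(Y^t)$. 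Chaining the two equalities yields $\text{E}Y_{\delta,t}=\text{Av}(Y_{\delta,t})=\text{Av}(Y^t)$, as claimed.

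There is essentially no obstacle here; the entire content of the proposition lives in the two results it rests on. The only point worth a sentence of care is bookkeeping: one must read ``$Y_{\delta,t}\in\mathcal{U}$'' as a statement about the sampled, possibly weighted distribution $f_{\delta}(\,\cdot\mid t)$, so that the expectation appearing in Definition 2 is the one actually estimable from $\zeta$, and one must apply Proposition 1 at a fixed $t$ rather than at the random $T$ — both immediate from the setup of Section 2. If desired, the conditional version (with an adjusting vector $\mathbf{L}$ and C3 in force) follows the identical two-line pattern using the conditional extension of Proposition 1 recorded just after its proof.
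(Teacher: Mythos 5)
Your proof is correct and follows exactly the paper's own argument: the $\mathcal{U}$ definition gives $\text{E}Y_{\delta,t}=\text{Av}(Y_{\delta,t})$, and Proposition 1 under C1 and C4 gives $\text{Av}(Y_{\delta,t})=\text{Av}(Y^t)$. The paper compresses this chain into a single line; your version merely spells out the same two steps.
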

\begin{proof}
The proof is again one line under the premises: $\text{E}Y_{\delta, t} = \text{Av}(Y_{\delta, t}) = \text{Av}(Y^t)$.
\end{proof}
Again, the properties of plug-in estimators for eq. (2.1) are not easy to discern in general and citations of the central limit theorem are also questionable or implausible. However, the properties of $\bar{Y}_{\delta, t}$ are exceptionally well-known. This largely solves the problem insofar as the sampling process secures a sample of $\mathcal{U}$ variables. Again, since C4 only requires the preservation of the support, this allows for an arbitrary distortion of the population distribution otherwise: a fact that is liberating w.r.t. study design and execution.

Note also that Proposition 3 is not as trivial as it seems. It is well known that the sample mean and mid-range estimate the same parameter when the underlying distribution is symmetric. However, it is false that all $\mathcal{U}$ random variables are symmetric. Hence, the $\mathcal{U}$ concept expands the universe where the sample mean can replace the mid-range. The next proposition establishes a new route to justifying the validity of C2, as previously mentioned.
\begin{proposition}
Suppose $Y^t$ and $Y_{\delta, t}$ are both $\mathcal{U}$ random variables under C1 and C4. Then $\text{E}Y^t = \text{E}Y_{\delta, t}$.
\end{proposition}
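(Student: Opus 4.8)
The plan is to chain together Definition~2 and Proposition~1, since the statement is essentially a transitivity argument once the right objects are lined up. First I would invoke Definition~2 twice: because $Y^t \in \mathcal{U}$, we have $\text{E}Y^t = \text{Av}(Y^t)$, and because $Y_{\delta, t} \in \mathcal{U}$, we have $\text{E}Y_{\delta, t} = \text{Av}(Y_{\delta, t})$. This converts both sides of the desired identity from expectations (w.r.t.\ the respective baseline measures) into functional averages (w.r.t.\ the uniform measure on the common support).

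Next I would apply Proposition~1. Since C1 and C4 hold, functional average exchangeability gives $\text{Av}(Y_{T, \delta}) = \text{Av}(Y^{T})$; specialized to the fixed treatment level $t$ (using the conditional/fixed-$t$ version of Proposition~1 noted immediately after its proof), this reads $\text{Av}(Y_{\delta, t}) = \text{Av}(Y^{t})$. Concatenating the three equalities yields
\[
\text{E}Y^t = \text{Av}(Y^t) = \text{Av}(Y_{\delta, t}) = \text{E}Y_{\delta, t},
\]
which is exactly C2 for the treatment value $t$.

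There is essentially no hard step here; the only point requiring a moment's care is making sure Proposition~1 is being used at a fixed $t$ rather than with $T$ random, and that the $\mathcal{U}$ hypotheses in Definition~2 are read w.r.t.\ the correct underlying measures (the baseline density of $Y_{\delta,t}$ and the counterfactual density of $Y^t$, respectively). Both are immediate from the setup, so the proof is one line modulo citing Definition~2 and Proposition~1. I would also remark that, as with Proposition~1, the identity extends verbatim to the conditional statement $\text{E}Y_{\mathbf{L}}^t = \text{E}Y_{\mathbf{L}, \delta, t}$ whenever $Y_{\mathbf{L}}^t$ and $Y_{\mathbf{L},\delta,t}$ are $\mathcal{U}$ class and C3 is additionally assumed, which is the form most useful for downstream standardization arguments.
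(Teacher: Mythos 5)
Your proposal is correct and is essentially identical to the paper's own proof, which consists of the same chain $\text{E}Y^t = \text{Av}(Y^t) = \text{Av}(Y_{\delta, t}) = \text{E}Y_{\delta, t}$ obtained from Definition~2 and Proposition~1. The additional remarks on fixing $t$ and on the conditional extension are consistent with the paper's surrounding discussion.
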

\begin{proof}
By our premises, the following string of equalities applies: $\text{E}Y^t = \text{Av}(Y^t) = \text{Av}(Y_{\delta, t}) = \text{E}Y_{\delta, t}$.
\end{proof}
Great care and energy of argument are often expended to establish that $\text{E}Y^{t} = \text{E}Y_{t}$. Proposition 4 offers a new manner of doing so insofar as it is believed that the experimental distribution is sum-symmetric. Conditional or unconditional on some vector $\mathbf{L=l}$, insofar as the researcher is willing to posit that the experimental distribution is in the $\mathcal{U}$ class, all that is actually required is a sufficiently executed sampling process that preserves the support and induces \textit{any} form of $\mathcal{U}$ status. Then it is implied that (conditional) mean exchangeability is achieved. Once more, since it seems plausible that $Y_{t}$ or $Y_{t, \mathbf{l}}$ can be mapped into a great number of $\mathcal{U}$ distributions on the same support via different sampling designs or conditioning, this result is potentially very useful.

For instance, if it is believed that the counterfactual distribution is symmetric, then a non-rejection of a statistical hypothesis of symmetry in the observed distribution can be supporting evidence that C2 is fulfilled. More generally, if the distance between the mid-range and the sample mean is small---and here one must be diligent in deciding what precisely defines the quality of this distance---this can also be construed as evidence. A researcher can also observe the behavior of the empirical CDF for visual confirmation. For regular random variables, the area below and above the curve should be approximately equal.
\newline
\paragraph{\textit{The Hoeffding Bootstrap, Continued}} With $\mathcal{U}$ random variables introduced, we now provide an informal justification for extending a slightly more conservative version of the Hoeffding bootstrap to an arbitrary $T_0$. This justification makes use of the principle of indifference, which states that the assignment of a uniform measure minimizes risk in the absence of information. Although it is preferable to commence from 'known' statements to derive a bound on uncertainty, we show that employing this principle is consistent with bounds derived under oracle assumptions in all circumstances except the most extreme. Importantly, for this exploration, we do condition on the empirical distribution.

We introduce some notation first. Say $T_*$ is a bootstrapped statistic s.t. $T_* \sim F_{T_*}(t)$, where $F_{T_*}(t) = \Psi \{t, \hat{F}_n (y) \}$ and $F_{T_0} = \Psi \{t, F(y) \}$. The $\Psi$ notation indicates that $F_{T_0}$ is a functional of the marginal marginal distribution function(s). Importantly, this does not assume independence since $\Psi \{t, F(y) \}$ can be complicated in an unknown fashion as a consequence of probabilistic dependencies. Like before, we then say $\{ T_{*, k} \}_{k \in \mathcal{K}}$ is a sample of bootstrap statistics identically and independently drawn from $F_{T_*}$, except now $\mathcal{K} = \{1, 2, \ldots, B  \}$. Here, we only assert 1) that there is some $N$ s.t. for all $n > N$, it is true that $T_0 \in \mathcal{U}$ and 2) that that the minimum and maximum values of the support of $F_{T_*}$, say $\{m_{*, 0}, M_{*, 0} \}$, are finite almost surely: a fact that is already implied by bounded nature of $T_0$. These presuppositions are relatively light. Only 2) is truly necessary.

Since $F_{T_*}(t)$ is a random CDF, it is also safe to assert that $\{m_{*, 0}, M_{*, 0} \}$ are random variables. For functions $q_m$ and $q_M$ then, say $m_{*, 0} = m_0 + q_m (Y_1, Y_2, \ldots, Y_n)$ WLOG. Constructing this object is always valid since $q_m = m_{*, 0} - m_0$ is defined on the same probability space as $m_{*, 0}$. Symmetry of argument supplies the same equation for $M_{*, 0}$: $M_{*, 0} = M_0 + q_M(Y_1, Y_2, \ldots, Y_n)$. Consequently:
$$M_{*, 0} - m_{*, 0} = M_0 - m_0 + \{ q_M(Y_1, Y_2, \ldots, Y_n) - q_m(Y_1, Y_2, \ldots, Y_n) \}$$
Now, define $q_{M-m} = q_M(Y_1, Y_2, \ldots, Y_n) - q_m(Y_1, Y_2, \ldots, Y_n)$ for brevity. It is apparent that when $q_{M-m} \geq 0$, it follows that $M_{0} - m_{0}  \leq M_{*, 0} - m_{*, 0}$. Because $q_{M-m} \geq 0$ provides us with a desired bound, diligence dictates examining the opposite valence.

Due to the fact that $Y$ is non-degenerate by tacit assumption, we can also assert with confidence that $-(M_0 - m_0) < q_{M-m}$. Thereby, reasoning conservatively requires us to stipulate a lower bound $LB_{q_{M-m}}$ s.t. $-(M_0 - m_0) < LB_{q_{M-m}} < 0$. Since every irrational number $c \in (0, 1)$ can be approximated arbitrarily well by a rational number, we can define two unknown integers $a$ and $b$ s.t. $a < b$ to conclude that $M_0 - m_0 \leq \{b - a \}^{-1} \cdot b \cdot \{ M_{*, 0} - m_{*, 0} \}$ almost surely. As a consequence, for any bounded statistic $T_0$ and arbitrary $\epsilon > 0$, there exists integers $a < b$ s.t. $\text{Pr}(|T_0 - \text{E}T_0| > \epsilon) \leq 2 \cdot \text{exp} \{- \{b \cdot (M_{*, 0} - m_{*, 0}  ) \}^{-2} \cdot \{ b-a \}^2 \cdot 2\epsilon^2  \}$. Call this the bootstrap concentration inequality.

By the form of the inequality, it is apparent that $\{a, b \}$ are related to the dependency structure and that $a$ especially controls extreme behavior. For instance, as $a \to b$, the bootstrap bound becomes trivial. If $b \to \infty$ much faster than $a$, then we recover the more efficient bound. Further modeling work that relates $\{a, b \}$ to the dependency structure of $\{Y_i \}_{i \in I_n}$ will thus be fortuitous. Just the same, additional discussion that incorporates a wider array of prior distributional assumptions on $\{a, b \}$ will indubitably be interesting and beneficial.

Out of ignorance, for our purposes, we cite the principle of indifference, which places a uniform measure on $[-(M_0 - m_0), 0]$. This suggests that $LB_{q_{M-m}} \equiv -2^{-1} \cdot (M_0 -m_0)$ is a defensible choice that minimizes risk since it is the expected value of the (constrained) $q_{M-m}$ function. Doing so is also equivalent to setting $b \equiv 2$ and $a \equiv 1$. As a consequence, without further information, it is implied that $M_0 - m_0 \leq 2 \cdot \{ M_{*, 0} - m_{*, 0} \}$. An employment of Hoeffding's inequality then supplies that $T_0 \pm 2 \cdot \{  M_{*, 0} - m_{*, 0} \} \cdot \sqrt{2^{-1} \text{log}(2/\alpha)}$ is a confidence set for $\alpha \in (0, 1)$ with at least $1-\alpha$ coverage. If $T_0 \in \mathcal{U}$ for sufficiently large $n$, then $T_0 \pm 2 \cdot \{  M_{*, 0} - m_{*, 0} \} \cdot \sqrt{6^{-1} \text{log}(2/\alpha)}$ can be used as an improvement. Still, this approach is more conservative than the one in Section 2. Altogether, the improved bound on the tail probabilities for this section inflates the error around the estimate by an approximate factor of 1.15 in comparison.

$ M_{*, 0}$ and $m_{*, 0}$ are also unknown. However, since $T_{*, (1)} \overset{a.s.}{\to} m_{*, 0}$ for all $n$ as $B \to \infty$ WLOG and $\{ T_{*, k}\}_{k \in \mathcal{K}}$ can be made arbitrarily large, $M_{*, 0} - m_{*, 0}$ can be replaced with $T_{*, (K)} - T_{*, (1)}$ as a plug-in with negligible error for large enough $B$. If the empirical distribution of $T_{*, k}$ demonstrates heavy tails, then typical choices for $B$ will likely suffice. However, if $T_{*, k}$ possesses a light tail, $B$ will need to be much larger to compensate for the sub-optimal convergence rate of extreme order statistics.
\newline
\paragraph{\textit{A Defense of the Principle of Indifference}} From here, we substantiate the use of the rule of indifference with a supplementary exploration. We show that the use of an oracle assumption leads to the same bounds in all but the most extreme of situations. The oracle assumption is as follows: for large enough $n$, $\text{E}T_0$ is contained in $\mathcal{S}_{T_*}$ in the sense that $m_{*, 0} \leq \text{E}T_0 \leq M_{*, 0}$. We say that this is an oracle assumption because it automatically supplies a population bound of probability one on the expected value. 

As a caveat, note that using this assertion is in no way illegal statistically if it can be justified. Taking a wider view, there is no thematic difference between this strategy and the supposition of a particular dependency structure so that asymptotic normality can be inferred. In place of estimating variance parameters, we would instead be replacing $M_{*, 0}$, say, with its bootstrap estimator. Nevertheless, taking a route that does not suppose these extra conditions directly is preferable. For instance, if C5' holds and the bootstrap \textit{minimally} works in the sense that if $T_0 \overset{a.s.}{\to} \text{E}T_0$ then $\text{E}T_* \to \text{E}T_0$ as $n \to \infty$, the oracle condition is relatively safe to assume in many contexts, at least for moderate sample sizes. Plug-ins for estimands that are linear functionals of $F(y)$ will often qualify, as will unbiased statistics more generally.

Now, we commence the defense. When comparing the extremes of $\mathcal{S}_{T_0}$ and $\mathcal{S}_{T_*}$, there are only four possible cases:
\begin{enumerate}
\item[(1)] $m_{*, 0} \leq m_0$ and $M_0 \leq M_{*, 0}$
\item[(2)] $m_{*, 0} \leq m_0$ and $M_{*, 0} \leq M_0$
\item[(3)] $m_0 \leq m_{*, 0}$ and $M_0 \leq M_{*, 0}$
\item[(4)] $m_0 \leq m_{*, 0}$ and $M_{*, 0} \leq M_{0}$
\end{enumerate}
Case (1) is trivial and automatically implies that $M_0 - m_0 \leq M_{*, 0} - m_{*, 0} \leq 2\cdot \{ M_{*, 0} - m_{*, 0} \}$. One can generally expect case (1) to hold under conditions of mutual negative dependence. Cases (2) and (3) provide the targeted bound in conjunction with the oracle statement. Using it allows us to conclude that $m_{*, 0} - m_0 \leq 0 \leq M_{*, 0} - \text{E}T_0$ for case (2) at no loss of generality. As a consequence, if $T_0 \in \mathcal{U}$, it then follows that $M_0 - m_0 \leq 2 \cdot \{ M_{*, 0} - m_{*, 0} \}$.

Cases (2) and (3) are likely to hold when dependencies are predominantly negative or positive, but they are not extreme in magnitude and the average number of dependencies is not linear in $n$. It is only case (4) that is problematic. For this case, $m_{*, 0}-m_0$ is strictly positive, for instance. Asserting that $m_{*, 0}-m_0 \leq M_{*, 0} - \text{E}T_0$ is thus even more non-trivial. Situations of extreme dependence s.t. a large proportion of the outcome variables are positively dependent in a strong and redundant sense can induce this case. In an extreme example, consider  sampling $\{ Y_i \}_{i \in I_n}$ but each outcome variable is secretly equal to $Y_1$ with additional random noise. Then $T_0$ is 'almost' a function of just one variable and the support of this statistic will be more similar to the support of $T(Y_1)$. In other words, it will not concentrate. C5' prevents a large proportion of these extreme cases by definition, and also because it is sufficient for establishing the uniform and almost sure convergence of the empirical CDF.

Nevertheless, to be comprehensive, we show that the desired bound holds for this case if and only if the principle of indifference holds. We can show one sub-case WLOG under the supposition of regular $\mathcal{U}$ status since $\int_{\mathcal{T_0}} F_{T_0} dt = \int_{\mathcal{T_0}} \{1- F_{T_0} \} dt $. This exercise applies to discrete random variables since integrating CDFs that are step-functions over an interval with the same endpoints as the support still supplies the targeted values. Now, assume $m_{*, 0}-m_0 \leq M_{*, 0} - \text{E}T_0$ at no loss of generality. If $T_0 \in \mathcal{U}$, then this ensures the bound. Observe the following, however, where $\implies$ should be read as 'implies':
\begin{align*}
m_{*, 0}-m_0 & \leq M_{*, 0} - \text{E}T_0 \implies \\
q_m & \leq M_0 + q_M -  \text{E}T_0 \implies \\
-q_{M-m} & \leq M_0 -  \text{E}T_0 \implies \\
-q_{M-m} & \leq 2^{-1} \{M_0 - m_0 \}
\end{align*}
The left hand side of the inequality is maximized when $q_{M-m} = -2^{-1}\{M_0 - m_0 \}$, which gives us the same assertion as the principle of indifference. Moving in the other direction entails a reversal of the logic and is omitted.

In conclusion, under the mild assumption of $\mathcal{U}$ limiting behavior, three of the four exhaustive cases supply the same bound without using the principle indifference as a premise. Therefore, statisticians can feel comfortable asserting it insofar as the most extreme of dependency scenarios are reasonably excluded. Since many popular statistics conceivably adopt $\mathcal{U}$-like behavior for even moderate sample sizes, this is a fecund route for inference in the all-too-common face of unknown and intractable systems of statistical dependencies. Establishing this route more rigorously will undoubtedly be a boon. Finally, we note that the analysis of this section can be generalized to the situation s.t. we do not condition on the empirical distribution. In this scenario, we already know that $\text{E}T_0 \in [m, M]$. However, if the dependency conditions are violated, we cannot consistently estimate these extremes. In this case, we can replace $M_{*, 0} - m_{*, 0}$ with $\hat{M} - \hat{m}$ in the statements above and still employ the same arguments with little alteration to conclude that the same, slightly more conservative confidence set is a defensibly principled choice, and even when $\hat{M} - \hat{m}$ is statistically inconsistent.

\subsection{Counterfactual Linear Regression}
Linear regression is a popular tool for causal and predictive inference. For the former, inverse probability weighting of the marginal structural model or standardization of the adjusted model are common approaches \cite{hernan2010causal, hernan2006estimating, imbens2004nonparametric, mansournia2016inverse}. The marginal structural model w.r.t. an event $\{ T=t \}$ is defined as follows:
\begin{equation}
\text{E}Y^t = \beta_0 + \beta_1 t
\end{equation}
The adjusted model is defined similarly in conjunction with a vector of adjusting variables $\mathbf{L}$, which are posited to conditionally fulfill C2:
\begin{equation}
\text{E}(Y|t, \mathbf{L})= \text{E}(Y^{t}| \mathbf{L}) = \beta_{0_*} + \beta_1 t + \mathbf{L}\boldsymbol{\beta}
\end{equation}
Standardization then yields that $\text{E}\{ \text{E}(Y^{t}| \mathbf{L}) \} = \beta_{0_*} + \beta_1 t + \text{E} \{ \mathbf{L} \} \boldsymbol{\beta} = \text{E}Y^t$ under C1-C3. As aforementioned, the identification of a vector $\mathbf{L}$ that achieves C2 for all levels of $T$ that are required for causal contrasts is non-trivial. This sentiment generalizes to the identification of a vector of random variables that are sufficient for estimating the probability model that is necessary for the inverse probability of treatment weighting. The difficulty is further exacerbated in both cases by informative sampling. For instance, for eq. (3.2), what is actually estimated is: $\text{E}_{\delta}(Y|t, \mathbf{L}) = \alpha_0 + \alpha_1 t + \mathbf{L}_{\delta}\boldsymbol{\alpha}$. Hence, even when C2 is met, only $\text{E}_{\delta}Y^t$ is identifiable in the absence of non-informative sampling or additional constraints. $\text{E}_{\delta}Y^t$, however, might not be the target of interest. 

We now use concepts from the previous sections to demonstrate that the core assumptions of linear regression for predictive (associative) inference are sufficient for the identification of causal parameters in conjunction with C4. To this end, we specify the data-generating mechanism for the linear model in eq. (3.3) below with $\mathbf{L=l}$ fixed. 
\begin{equation}
Y_{\delta_i} = \alpha_0 + \alpha_1 t_i + \mathbf{l}_i \boldsymbol{\alpha} + \epsilon_{\delta_i}
\end{equation}
Traditionally, eq. (3.3) requires that $\text{E}_{\delta}(\epsilon_i | t_i, \mathbf{l}_i) =0$ for $\forall i$ if only predictive inference is the goal and all covariate patterns of interest have truly been experimentally fixed. This is weaker than strict exogeneity, which requires that $\text{E}_{\delta}(\epsilon_i | T_i, \mathbf{L}_i) =0$ when $T_i$ and $ \mathbf{L}_i$ are stochastic. Using standardization requires a slightly weaker form of strict exogeneity, conditioned on all treatment contrasts of interest, since the method averages over $\mathbf{L}_{\delta}$: $\text{E}_{\delta}(\epsilon_i | t_i, \mathbf{L}_i) =0$ for $\forall i$. Otherwise, for finite sample inference, the second core assumption is that $\epsilon_{\delta_i} \sim N(0, \sigma^2_i)$ for $\forall i$.

The first core assumption is commonly evaluated using the predicted versus residual plot. Under the working proposition of valid specification, this plot should demonstrate an approximately symmetric scatter of the residuals about the horizontal zero line for any arbitrarily small neighborhood around any predicted point on the x-axis.

To make use of these traditional conditions, we must first make an inconsequential adjustment to the assumption of normality. We are working within a universe of bounded random variables. Consequently, the $\epsilon_{\delta_i}$ of eq. (3.3) cannot be normally distributed. This is no great loss for four related reasons. Firstly, in a grand majority of scientific investigations, $Y_{i}$ is bounded. For example, if each $Y_i$ is a measurement of a person's blood pressure, it is impossible for it to be less than zero or greater than an arbitrary real number. Its distribution cannot be supported on a set that is equal to $\mathbb{R}$. This automatically implies that the $\epsilon_i$ cannot truly be normally distributed. In these situations, when statisticians assume normality, it is intended as a feasible approximation that results in negligible error, and that is fecund mathematically.

The second reason is similar to the first. Even if someone wishes to insist that $Y_i$ is supported on the entire real line, $Y_{\delta_i}$ often cannot be due to the intrinsic limitations of measurement and observation. Thirdly, as hinted in the first reason, the assumption of normality can be replaced with the assumption that $\epsilon_{\delta_i}$ has a CDF $F_i(e_{\delta})$ of a normal distribution that has been symmetrically truncated around zero. This is equivalent to positing that each $\epsilon_{\delta_i}$ is related some variable $Z_i \sim N(0, \sigma^2_{Z_i})$ s.t. for an (almost) arbitrarily small $\tau > 0$, $\text{Pr}(-M_i \leq Z_i \leq M_i ) = 1 - \tau$ and $F_i(e_{\delta}) = (1-\tau)^{-1} \Phi_{Z_i}(e_{\delta})$ on $[-M_i, M_i]$. Provided this setup, the bias that results from treating $\epsilon_{\delta_i}$ as strictly normally distributed for mathematical convenience is unimportant, especially since one does not need to identify $\sigma^2_{Z_i}$.

The fourth and last reason, which motivates the next proposition, is related to requirement of symmetric scatter in the residual versus fitted plot. A symmetrically truncated normal distribution is a special case of a $\mathcal{U}$ random variable. Moreover, when a continuous random variable has an expected value of zero, all that is required for regular $\mathcal{U}$ status is for it to be supported on a symmetric interval $[-M_i, M_i]$.

Hence, the typical set of assumptions already employed for fixed linear regression already requires that each $\epsilon_{\delta_i} \in \mathcal{U}$. Additionally, we also note that positing only that $\epsilon_{\delta_i} \in \mathcal{U}$ $\forall i$ is a fundamentally weaker assumption than (symmetrically truncated) normality. Under this milder condition, a researcher only needs to verify that the residual versus predicted plot is (approximately) symmetrically \textit{supported} around zero about any neighborhood of predicted values. The behavior of the scatter within any neighborhood is otherwise unimportant, insofar as it reasonably justifies that the expected value is also zero. Nevertheless, it is apropos to state that, if only this milder condition is supposed, then the concentration inequality of \textbf{Table 1} or a central limit theorem are required for the construction of confidence intervals. Of course, under copious amounts of dependencies, a central limit will not necessarily apply.

We now present a useful main result in Proposition 5, although it is technically a more detailed case of Proposition 3. The extra assumption of regularity is not strictly necessary.
\begin{proposition}
Assume C1 and C4. Say $Y_{t, \mathbf{l}, \delta} = g(t, \mathbf{l}, \boldsymbol{\alpha}) + \epsilon_{t, \mathbf{l}, \delta}$ for some measurable (possibly monotonic) function $g$. Suppose each $\epsilon_{t, \mathbf{l}, \delta}$ is regular, $\text{E}_{\delta}(\epsilon | t, \mathbf{l}) = 0$, and $\epsilon_{t, \mathbf{l}, \delta} \in \mathcal{U}$ for $\forall t, \mathbf{l}$ fixed. Then $\text{E}_{\delta}Y_{t, \mathbf{l}} = \text{Av}(Y_{\mathbf{l}}^{t})$.
\end{proposition}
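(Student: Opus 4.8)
The plan is to reduce Proposition 5 to Proposition 3 (in its conditional form) by verifying that $Y_{t, \mathbf{l}, \delta}$ lies in the $\mathcal{U}$ class once $t$ and $\mathbf{l}$ are held fixed. First I would observe that with $t$ and $\mathbf{l}$ fixed the term $g(t, \mathbf{l}, \boldsymbol{\alpha})$ is a fixed real constant, say $c_{t, \mathbf{l}}$, so that $Y_{t, \mathbf{l}, \delta} = c_{t, \mathbf{l}} + \epsilon_{t, \mathbf{l}, \delta}$. The ``possibly monotonic'' qualifier on $g$ therefore plays no role here; it matters only if one later wishes to standardize over $\mathbf{L}$. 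Since $\epsilon_{t, \mathbf{l}, \delta}$ is assumed regular, the shifted variable $c_{t, \mathbf{l}} + \epsilon_{t, \mathbf{l}, \delta}$ is again supported on a single interval, hence regular as well.

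Next I would invoke the translation property of the $\mathcal{U}$ class recorded in \textbf{Table 1}, namely that $Y \in \mathcal{U}$ implies $Y + c \in \mathcal{U}$ for any $c \in \mathbb{R}$. Applying this with $c = c_{t, \mathbf{l}}$ and the hypothesis $\epsilon_{t, \mathbf{l}, \delta} \in \mathcal{U}$ gives $Y_{t, \mathbf{l}, \delta} \in \mathcal{U}$ immediately. If one prefers an explicit check, note that the functional average is translation-equivariant on regular variables, so $\text{Av}(Y_{t, \mathbf{l}, \delta}) = c_{t, \mathbf{l}} + \text{Av}(\epsilon_{t, \mathbf{l}, \delta}) = c_{t, \mathbf{l}} + \text{E}_{\delta}(\epsilon \mid t, \mathbf{l}) = c_{t, \mathbf{l}}$, where the second equality uses $\epsilon_{t, \mathbf{l}, \delta} \in \mathcal{U}$ and the third uses the mean-zero hypothesis; meanwhile $\text{E}_{\delta} Y_{t, \mathbf{l}} = c_{t, \mathbf{l}} + \text{E}_{\delta}(\epsilon \mid t, \mathbf{l}) = c_{t, \mathbf{l}}$, so the two coincide, which is precisely $\mathcal{U}$-membership.

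Finally I would apply the conditional version of Proposition 3 — the form, noted in the text just before the statement, that carries over verbatim when all variables are conditioned on $\mathbf{L} = \mathbf{l}$ (with C3 tacitly in force): under C1 and C4, a $\mathcal{U}$-class sampled outcome satisfies $\text{E}_{\delta} Y_{t, \mathbf{l}} = \text{Av}(Y_{\mathbf{l}}^{t})$. Chaining this with the previous paragraph gives the claim. I do not expect a genuine obstacle: the result is essentially a corollary of Proposition 3 together with the translation-closure of $\mathcal{U}$. The only points deserving care are bookkeeping ones — confirming that shifting $\epsilon$ by a constant preserves both regularity and $\mathcal{U}$-status (both immediate from \textbf{Table 1}), and making explicit that it is the conditional generalizations of Propositions 1 and 3, and the conditional form of C4 (namely $\mathcal{S}_{Y_{t, \mathbf{l}, \delta}} = \mathcal{S}_{Y_{\mathbf{l}}^{t}}$), that are being used.
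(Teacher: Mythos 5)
Your proposal is correct and follows essentially the same route as the paper, which itself frames Proposition 5 as ``a more detailed case of Proposition 3'': the paper's proof just carries out your reduction explicitly, computing $\min$ and $\max$ of the support of $Y_{t,\mathbf{l},\delta}$ and using regularity to identify $\text{Av}$ with the mid-range, whereas you invoke the translation-closure of $\mathcal{U}$ from \textbf{Table 1} and then cite the conditional form of Proposition 3. Your packaging also makes visible why the regularity hypothesis is dispensable, consistent with the paper's own remark to that effect.
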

\begin{proof}
Let $t, \mathbf{l}$ be arbitrary. Then $\text{E}_{\delta}Y_{t, \mathbf{l}} = g(t, \mathbf{l}, \boldsymbol{\beta})$ since $\text{E}_{\delta}(\epsilon | t, \mathbf{l}) = 0$.

For an arbitrary bounded random variable $Z$, say $\min (Z) = \underset{z \in \mathcal{S}_Z}{\min}(z)$ and $\max (Z) =  \underset{z \in \mathcal{S}_Z}{\max}(z)$, the greatest lower and least upper bounds of the closed set $\mathcal{S}_Z$, respectively. Since $g(t, \mathbf{l}, \boldsymbol{\beta})$ is a constant, it follows that $\min (Y_{t, \mathbf{l}, \delta}) =  g(t, \mathbf{l}, \boldsymbol{\beta}) + \min (\epsilon_{t, \mathbf{l}, \delta})$ and $\max (Y_{t, \mathbf{l}, \delta}) =  g(t, \mathbf{l}, \boldsymbol{\beta}) + \max (\epsilon_{t, \mathbf{l}, \delta})$. Moreover, since each $\epsilon_{t, \mathbf{l}, \delta}$ is regular, then each $Y_{t, \mathbf{l}, \delta}$ is also obviously regular.
From here:
\begin{align*}
\min (Y_{t, \mathbf{l}, \delta}) + \max (Y_{t, \mathbf{l}, \delta}) & = 2 g(t, \mathbf{l}, \boldsymbol{\beta}) + \min (\epsilon_{t, \mathbf{l}, \delta}) + \max (\epsilon_{t, \mathbf{l}, \delta}) \implies \\
2^{-1} \{ \min (Y_{t, \mathbf{l}, \delta}) + \max (Y_{t, \mathbf{l}, \delta}) \} & = g(t, \mathbf{l}, \boldsymbol{\beta}) + 2^{-1} \{ \min (\epsilon_{t, \mathbf{l}, \delta}) + \max (\epsilon_{t, \mathbf{l}, \delta}) \} \implies \\
\text{Av}(Y_{t, \mathbf{l}, \delta}) & = g(t, \mathbf{l}, \boldsymbol{\beta}) + 0 \implies \\
\text{E}_{\delta}Y_{t, \mathbf{l}} & = \text{Av}(Y_{t, \mathbf{l}, \delta}) =  \text{Av}(Y_{\mathbf{l}}^{t})
\end{align*}
The third line follows from regularity and the fact that $\epsilon_{t, \mathbf{l}, \delta} \in \mathcal{U}$ for $\forall t, \mathbf{l}$ fixed. The last line follows from substitution, C1, and C4.
\end{proof}
Set $g\{ t, \mathbf{l}, (\alpha_0, \alpha_1, \boldsymbol{\alpha}) \} = \alpha_0 + \alpha_1 t + \mathbf{l} \boldsymbol{\alpha}$ to recover eq. (3.3). Under the assumption that $(t', \mathbf{l})$ and $(t'', \mathbf{l})$ have both been fixed, where the values $t'$ and $t''$ represent the treatment values to be contrasted, Proposition 5 implies that $\alpha_1 \propto  \text{Av}(Y_{\mathbf{l}}^{t'}) -  \text{Av}(Y_{\mathbf{l}}^{t''})$: the difference in the \textit{average} values of $Y_{\mathbf{l}}^{T}$ when $T=t'$ and $T=t''$. When $(t', \mathbf{l})$ and $(t'', \mathbf{l})$ have not been fixed but at least all treatment values have been, i.e., when the researcher did not fix $\mathbf{l}$ for all contrasts of scientific interest, the previous statement still holds in general when $\text{E}_{\delta}(\epsilon | t, \mathbf{L}) = 0$ or $\forall t$ involved. The proof of Proposition 5 would only need to use this stronger statement with no further change. If the researcher wishes to reason about contrasts of $T$ that have not been fixed as well, then the (non-trivial) assumption that $\text{E}_{\delta}(\epsilon | T, \mathbf{L}) = 0$ suffices.

This proves that conditions that are weaker than those traditionally supposed for making inferences about associations are sufficient for inference w.r.t. causal parameters. A statistician can target these parameters using linear regression under (almost) arbitrary sampling bias, insofar as linearity and $\mathcal{U}$ status in the error distributions are feasibly defensible w.r.t. the sample measures and at least the supports have been preserved. Even this last condition can be weakened further since we truly only require the preservation of functional averages.

We choose to highlight three additional points of interest. The first one is about the interpretation of $\alpha_1$. It uses similar language to current interpretations. However, care is due. Although the word 'average' is often used for interpreting the coefficients of typical linear regressions, this is imprecise slang for the change in expected value. The word 'average' is also imprecise for the functional average, but it is at least closer in spirit in comparison to other applications since the functional average is a uniform averaging of the support.

The second point is that Proposition 5 enables reasoning about counterfactual conditional functional averages in high-dimensional settings. Often, however, the researcher cares mostly about a marginal estimate. Although they can serve a similar purpose, $\text{E} \{ \text{Av}(Y_{\mathbf{L}}^t) \} \neq \text{Av}(Y^t)$ and $\text{Av} \{ \text{Av}(Y_{\mathbf{L}}^t) \} \neq \text{Av}(Y^t)$ in general. This does not signify that these parameters do not possess scientific meaning. For example, $\text{E} \{ \text{Av}(Y_{\mathbf{L}}^{t'}) \} - \text{E} \{ \text{Av}(Y_{\mathbf{L}}^{t''}) \}$ can still be interpreted as an \textit{expected} functional average effect over $\mathbf{L}$.

Nevertheless, one special circumstance when the identity $\text{E} \{ \text{Av}(Y_{\mathbf{L}}^t) \} = \text{E}Y^t$ does hold is when C2, the conditions of Proposition 5 under stricter exogeneity, and the additional requirement that $Y^t \in \mathcal{U}$ are valid.

The last point is concise to state. The analysis of variance (ANOVA) is a special case of the linear regression model presented. It is an elementary tool that is ubiquitous in research. All prior discussion and Proposition 5 therefore apply to ANOVA procedures under conditions that are already stipulated. Hence, insofar as C1 and C4 are defensible, this means that a plethora of prior work can be re-interpreted with a restricted causal lens in partnership with a structural causal model.
\section{Monte Carlo Simulations}
Before we apply our strategy to real data, we illustrate its utility with a set of Monte Carlo simulations that show how functional average and $\mathcal{U}$ concepts are useful for causal inference. For simplicity, we proceed with non-informative sampling conditions that presuppose mutual independence. All simulations use $M=1,000$ iterations for sample sizes $n \in \{500, 2500, 5000, 10000 \}$. Furthermore, all constructed $1-\alpha$ confidence sets use $\alpha = .05$. Three main simulation experiments are provided in total. The first examines the behavior of basic functional average estimators for symmetric and non-symmetric distributions. The second and third simulations demonstrate that causal effects can be consistently estimated without controlling for confounding. All experiments examine the performance of Hoeffding style bootstrapping procedures.
\subsection{Univariate Functional Average Estimation}
The first simulation of this experiment examines the performance of $\hat{MR}$ for three truncated normal distributions: $Y_1 \sim TN(m=0, M=20, \mu = 10, \sigma = 5), Y_2 \sim TN(m=0, M=15, \mu = 10, \sigma = 3)$, and $Y_3 \sim TN(m=0, M=15, \mu = 5, \sigma = 3)$. The first random variable $Y_1$ is in the $\mathcal{U}$ class and hence $\theta_1 = \text{Av}(Y_1) = \text{E}Y_1 = 10$. However, $Y_1$ and $Y_2$ are not $\mathcal{U}$ variables. Their distributions are skewed with tails that impact convergence behavior. For these variables, $\theta = \text{Av}(Y) = 7.5$.

Hoeffding bootstrap style confidence sets $(\hat{CI}_H)$ are constructed as described in Section 2. To contrast its performance, we also use an $m$-out-of-$n$ bootstrap. Again, an important requirement of the $m$-out-of-$n$ bootstrap is that $m \to \infty$, but $n^{-1}m \to 0$. To meet this criteria, we set $m = r(\sqrt{n})$ since this setting produces relatively conservative results. Hence, if it fails to perform well, this highlights the utility of the Hoeffding procedure. Percentile confidence sets $(\hat{CI}_{p, m})$ are constructed from the $m$-out-of-$n$ bootstraps. For reference we also construct Hoeffding style confidence sets $(\hat{CI}_{H, m})$ from this process. Importantly, all bootstrap procedures makes use of simple random sampling with replacement and only $500$ bootstrap samples. Although sub-optimal, a low number of bootstrap samples is used to limit computational burden. A decent performance of the Hoeffding bootstrap at $B=500$ is still a good indicator. Empirical coverage rates are estimated with $EC = M^{-1} \sum_{i=1}^M 1_{\theta \in \hat{CI}_i}$ WLOG.

\textbf{Table 2} presents the results of this experiment. Importantly, all values in tables henceforth represent the arithmetic average of simulated objects, including the endpoints of confidence sets.
\begin{table}[H]
\caption{Functional Average Estimation, Continuous}
\centering
\begin{adjustbox}{width=1\textwidth}
\begin{tabular}{ccc|cc|cc|cc}
  \hline
     \\[-3.5\smallskipamount]
$\text{Av}(Y)$ & n & $\hat{MR}$ & $\hat{CI}_{H, m}$ & $EC_{H, m}$ & $\hat{CI}_{p, m}$ & $EC_{p, m}$ & $\hat{CI}_{H}$ & $EC_{H}$ \\ 
   \\[-3.5\smallskipamount]
  \hline
     \\[-3.5\smallskipamount]
$\theta = 10$ & 500 & 10.01 & (1.85, 18.15) & 1 & (8.02, 11.98) & 1 & (8.87, 11.13) & 1 \\ 
   & 2500 & 10 & (4.5, 15.5) & --- & (8.72, 11.27) & --- & (9.72, 10.28) & --- \\ 
   & 5000 & 10 & (5.42, 14.58) & --- & (8.97, 11.03) & --- & (9.86, 10.14) & --- \\ 
   & 10000 & 10 & (6.26, 13.74) & --- & (9.17, 10.83) & --- & (9.93, 10.07) & --- \\
      \\[-3.5\smallskipamount]
   \hline 
      \\[-3.5\smallskipamount]
 $\theta = 7.5$  & 500 & 8.11 & (2.1, 14.12) & 1 & (7.58, 10.64) & 0.40 & (6.51, 9.7) & 0.88 \\ 
   & 2500 & 7.74 & (2.98, 12.5) & --- & (7.6, 10.09) & 0.30 & (6.82, 8.65) & 0.94 \\ 
   & 5000 & 7.63 & (3.31, 11.96) & --- & (7.59, 9.87) & 0.25 & (6.96, 8.31) & 0.97 \\ 
   & 10000 & 7.58 & (3.7, 11.46) & --- & (7.58, 9.67) & 0.21 & (7.13, 8.03) & 0.98 \\ 
      \\[-3.5\smallskipamount]
   \hline
      \\[-3.5\smallskipamount]
 $\theta = 7.5$  & 500 & 6.91 & (0.88, 12.94) & 1 & (4.37, 7.44) & 0.43 & (5.31, 8.51) & 0.90 \\ 
   & 2500 & 7.26 & (2.46, 12.06) & --- & (4.91, 7.4) & 0.30 & (6.34, 8.18) & 0.95 \\ 
   & 5000 & 7.37 & (3.04, 11.69) & --- & (5.13, 7.41) & 0.23 & (6.69, 8.04) & 0.97 \\ 
   & 10000 & 7.42 & (3.52, 11.33) & --- & (5.33, 7.42) & 0.18 & (6.97, 7.87) & 0.97 \\ 
      \\[-3.5\smallskipamount]
   \hline
\end{tabular}
\end{adjustbox}
\caption*{\small $\dagger$ $\theta_i$ corresponds to the functional average of the $Y_i$ TN distributions introduced above; '---' indicates that the value is the same as the first row}
\end{table}
These results demonstrate that $\hat{MR}$ behaves as intended. It is unbiased for the symmetric distribution and convergence behavior---albeit slow---is observable w.r.t. the target parameters for the asymmetric distributions that exhibit more problematic tail behavior. Importantly, the Hoeffding-style bootstrap also appears to behave as intended. Although it failed to uphold nominal coverage values for the skewed distributions at $n=500$, it quickly overcame this behavior to provide conservative empirical coverage for $\theta$. Also, the difficulties of efficiently estimating $\text{Av}(Y)$ are evident for non-$\mathcal{U}$ variables, highlighting the utility of this type of variable. Notably, the $m$-out-of-$n$ percentile interval did not perform well even when $m$ was $O(\sqrt{n})$.

The second experiment is for discrete variables. Like before, we use three different truncated normal distributions: $Y_1 \sim TN(m=0, M=40, \mu = 20, \sigma = 5), Y_2 \sim TN(m=0, M=40, \mu = 25, \sigma = 8)$, and $Y_3 \sim TN(m=0, M=40, \mu = 15, \sigma = 8)$. These random variables are uniformly rounded to the nearest integer to induce discreteness. Here, we contrast $\hat{MR}$ with $\hat{Av}$, the discrete plug-in for eq. (2.1). Hoeffding style bootstraps are again employed to construct confidence sets. However, now we use a $\mathcal{U}$ approximation in accordance with Section 3. Even if $\mathcal{U}$ status does not hold exactly, insofar as convergence behavior to a constant holds, the method should remain robust.

Results for this simulation are available in \textbf{Table 3}. We no longer examine the performance of the $m$-out of-$n$ bootstrap.
\begin{table}[H]
\caption{Functional Average Estimation, Discrete}
\centering
\begin{adjustbox}{width=1\textwidth}
\begin{tabular}{cc|ccc|ccc}
  \hline
    \\[-3.5\smallskipamount]
$\text{Av}(Y) = 20$ & n & $\hat{Av}$ & $\hat{CI}_{H}$ & $EC_{H}$ & $\hat{MR}$ & $\hat{CI}_{H}$ & $EC_{H}$ \\ 
  \\[-3.5\smallskipamount]
  \hline
    \\[-3.5\smallskipamount]
$r(Y_1)$ & 500 & 20.012 & (15.21, 24.82) & 1 & 20.044 & (13.88, 26.21) & 1 \\ 
   & 2500 & 19.992 & (15.84, 24.14) & --- & 19.969 & (14.93, 25.01) & --- \\ 
   & 5000 & 20.014 & (16.18, 23.85) & --- & 20.023 & (15.58, 24.47) & --- \\ 
   & 10000 & 20.020 & (16.39, 23.65) & --- & 20.016 & (16, 24.04) \\ 
     \\[-3.5\smallskipamount]
   \hline
     \\[-3.5\smallskipamount]
 $r(Y_2)$  & 500 & 21.921 & (17.62, 26.22) & 0.971 & 21.218 & (16.48, 25.96) & 0.955 \\ 
   & 2500 & 20.490 & (18.19, 22.79) & 0.980 & 20.365 & (18.17, 22.56) & 0.963 \\ 
   & 5000 & 20.194 & (18.55, 21.83) & 0.985 & 20.169 & (18.67, 21.67) & 0.975 \\ 
   & 10000 & 20.051 & (19, 21.1) & 0.972 & 20.050 & (19.06, 21.04) & 0.957  \\ 
     \\[-3.5\smallskipamount]
   \hline
     \\[-3.5\smallskipamount]
 $r(Y_3)$  & 500 & 18.053 & (13.8, 22.31) & 0.963 & 18.788 & (14.04, 23.54) & 0.953 \\ 
   & 2500 & 19.518 & (17.22, 21.82) & 0.985 & 19.638 & (17.45, 21.83) & 0.967 \\ 
   & 5000 & 19.803 & (18.16, 21.44) & 0.987 & 19.833 & (18.34, 21.32) & 0.980 \\ 
   & 10000 & 19.941 & (18.87, 21.01) & 0.977 & 19.942 & (18.94, 20.94) & 0.963 \\ 
     \\[-3.5\smallskipamount]
   \hline
\end{tabular}
\end{adjustbox}
\end{table}
The results of the discrete simulation largely match those of the preceding continuous one. The plug-in estimator performed more favorably than the sample mid-range only for $r(Y_1)$: the distribution with the lightest tails. Importantly, although many simulations demonstrated departures from $\mathcal{U}$ status, the coverage remained robust due to the conservative nature of the Hoeffding bootstrap. Further details pertaining to this fact are available in the supplementary material.
\subsection{Causal Inference with Functional Averages}
Next, we demonstrate a classical case of confounding where the functional average treatment effect is identifiable and statistically consistently estimable without any adjustment. We use the following variables for this simulation: $C \sim Bern(.5)$, $T \sim Bern(.3 + .5C)$, and $e \sim TN(m=-50, M=50, \mu = 0, \sigma = \tau)$. Moreover, we will say that $Y^{t=1} = 110 + 50C + e$, $ Y^{t=0} = 100 + 50C + e$, and $Y = Y^{t=1}T + (1-T)Y^{t=0}$.

The confounding variable is $C$, which is present in half of the theoretical population on average. When $C=1$, the probability of allocation to treatment is larger. The structure of the confounding also preserves the support of the counterfactual distribution w.r.t. the observed one. We also set $\tau$ to $5$ then $25$ to demonstrate the performance of a functional average estimator when the tail probabilities are thin or heavy.

We contrast simple linear regression with $\hat{MR}$ for estimating $\Delta = \text{Av}(Y^{t=1}) -  \text{Av}(Y^{t=0}) = 10$. Here, the mid-range estimator of $\Delta$ is $\hat{\Delta}_{MR} = 2^{-1}\{ Y_{(1), t=1} + Y_{(n_1), t=1} \} - 2^{-1} \{ Y_{(1), t=0} + Y_{(n_0), t=0} \}$. This simulation is executed WLOG since it can be implicitly assumed that the researcher has stratified on some set of variables---such as propensity scores---to limit confounding bias or achieve the equality of supports. Confidence sets and estimators of the empirical coverage are all otherwise constructed as previously explored using the Hoeffing bootstraps of Section 2. Power is estimated by $EP_H = M^{-1} \sum_{i=1}^M 1_{0 \notin \hat{CI}_{H_i}}$. \textbf{Table 4} has the results.
\begin{table}[H]
\caption{Continuous Effect Estimators: $\Delta = 10$}
\centering
\begin{adjustbox}{width=1\textwidth}
\begin{tabular}{ccc|ccc}
\hline
   \\[-3.5\smallskipamount]
$\tau = 5$ & n & $\hat{\Delta}_{OLS}$ & $\hat{\Delta}_{MR}$ & $\hat{CI}_H$ & $EP_H$ \\ 
   \\[-3.5\smallskipamount]
  \hline
     \\[-3.5\smallskipamount]
 & 500 & 33.45 & 11.76 & (1.53, 22) & 0.71 \\ 
   & 2500 & 34.39 & 11.69 & (3.05, 20.33) & 0.85 \\ 
   & 5000 & 35.09 & 11.65 & (3.67, 19.63) & 0.90 \\ 
   & 10000 & 35.18 & 11.47 & (3.88, 19.07) & 0.92 \\ 
      \\[-3.5\smallskipamount]
   \hline
      \\[-3.5\smallskipamount]
   $\tau = 25$ & n & $\hat{\Delta}_{OLS}$ & $\hat{\Delta}_{MR}$ & $\hat{CI}_H$ & $EP_H$ \\ 
      \\[-3.5\smallskipamount]
     \hline
        \\[-3.5\smallskipamount]
 & 500 & 33.44 & 12.82 & (-10.47, 36.11) & 0.05 \\ 
   & 2500 & 34.40 & 10.84 & (2.73, 18.95) & 0.91 \\ 
   & 5000 & 35.11 & 10.47 & (5.79, 15.15) & 1 \\ 
   & 10000 & 35.20 & 10.25 & (7.68, 12.82) & --- \\ 
      \\[-3.5\smallskipamount]
   \hline
\end{tabular}
\end{adjustbox}
\caption*{\small $\dagger$ All empirical coverage estimators returned $1$ for $\hat{\Delta}_{MR}$}
\end{table}
As expected, $\hat{\Delta}_{OLS}$ is a confounded estimator of $\text{E}Y^{t=1} - \text{E}Y^{t=0} = \Delta =10$. However, this is not the case for $\hat{\Delta}_{MR}$, which demonstrates convergence behavior towards the true parameter value, albeit at a sub-optimal rate. Reiteration of a poignant fact here is valuable: $\hat{MR}$ demonstrates this behavior without adjustment. Moreover, although the sampling conditions supposed here were non-informative, this was unnecessary. Insofar as the supports are preserved, informative sampling conditions are unimportant w.r.t. statistical consistency, although they might impact convergence rates. These results are also consistent with prior discussions on the mid-range. We expect the mid-range to possess more favorable properties when non-negligible mass or density rests in the tails, which is the case when $\tau = 25$ for these simulations.

Next, we present a restricted discrete analogue to the last experiment. $C$ and $T$ retain their definitions, but $\epsilon \sim Binom(\tau, 2^{-1})$ for $\tau \in \{30, 50 \}$. Otherwise, $Y^{t=0} = 10C + e$, $Y^{t=1} = Y^{t=0} + 5$, and $Y = Y^{t=1}T + (1-T)Y^{t=0}$ as before. For this simulation, we employ the Hoeffing bootstrap for $\mathcal{U}$ statistics once more. \textbf{Table 5} presents the results.
\begin{table}[H]
\caption{Discrete Effect Estimators: $\Delta =5$}
\centering
\begin{adjustbox}{width=1\textwidth}
\begin{tabular}{ccc|ccc|ccc}
  \hline
   \\[-3.5\smallskipamount]
 & n & $\hat{\Delta}_{OLS}$ & $\hat{\Delta}_{\hat{Av}}$ & $\hat{CI}_H$ & $\hat{EP}_H$ & $\hat{\Delta}_{\hat{MR}}$ & $\hat{CI}_{H}$ & $EP_H$  \\ 
  \\[-3.5\smallskipamount]
  \hline
   \\[-3.5\smallskipamount]
$\tau = 30$ & 500 & 9.694 & 6.059 & (0.55, 11.57) & 0.681 & 5.899 & (-0.47, 12.27) & 0.447 \\ 
   & 2500 & 9.875 & 5.91 & (1.34, 10.48) & 0.871 & 5.861 & (0.63, 11.09) & 0.680 \\ 
   & 5000 & 10.017 & 5.8 & (1.45, 10.15) & 0.890 & 5.757 & (0.83, 10.69) & 0.728 \\ 
   & 10000 & 10.037 & 5.765 & (1.68, 9.85) & 0.921 & 5.729 & (1.19, 10.27) & 0.805 \\ 
     \\[-3.5\smallskipamount]
  \hline
     \\[-3.5\smallskipamount]
 $\tau = 50$  & 500 & 9.696 & 6.443 & (-0.16, 13.04) &  0.458  & 6.192 & (-2.11, 14.49) & 0.236 \\ 
   & 2500 & 9.875 & 6.268 & (0.75, 11.79) & 0.728 & 6.150 & (-0.69, 12.99) & 0.419 \\ 
   & 5000 & 10.017 & 6.147 & (0.87, 11.42) & 0.734 & 6.052 & (-0.48, 12.59) & 0.457 \\ 
   & 10000 & 10.036 & 6.074 & (1.08, 11.07) & 0.804 & 5.995 & (-0.03, 12.02) & 0.551 \\ 
     \\[-3.5\smallskipamount]
   \hline
\end{tabular}
\end{adjustbox}
\caption*{\small $\dagger$ All empirical coverage estimators were $> .998$}
\end{table}
The discrete estimators demonstrate finite sample bias. Nonetheless, they also demonstrate convergence towards $\Delta$ as $n$ becomes large, while $\hat{\Delta}_{OLS}$ remains confounded. Although $\Delta=5$ is a small to moderate effect size, $\hat{\Delta}_{\hat{Av}}$ shows ample power to detect it under the null hypotheses that $\Delta = 0$ for reasonable sample sizes when $\tau$ is set to $30$. Traditional levels of acceptable power are only achieved by the $\hat{\Delta}_{\hat{Av}}$ estimator at $n=10000$ when $\tau = 50$. The increase in variance and the lowered likelihood of observing some values of the support hinder both estimators' performance. Although the mid-range estimator seems to possess less bias for these simulations, the plug-in estimator seems to possess more power.

Our last experiment demonstrates how linear regression can still be used for causal inference when mean exchangeability does not hold but each error term is a $\mathcal{U}$ variable. To this end, we use a new setup for any measurable function $g$: $T \sim Bern(.3)$, $U_1 \sim TN(m=-10, M=10, \mu=0, \sigma =2)$, and $\mu_T = 100 + 20T$. Then we will say $U_2 \sim TN(m= \mu_T - g(T)-10, M= \mu_T - g(T) + 10, \mu = 0, \sigma = 2)$, $Y_T = \mu_T + U_1$, and $Y^T = g(T) + U_2$.

For simplicity, we set $g(T) = 90 + 10T$. This model structure can be amended to include more covariates. However, this is unnecessary for our demonstration. What is important is that, conditional on a design matrix $\mathbf{x}$, the functional averages are preserved and that linearity holds for $Y_{\mathbf{x}}$. The true generating process for the counterfactual distribution can be unknown. Here, $\Delta = 20$. However, confounding is present since $\text{E}Y^{t=1} - \text{E}Y^{t=0} = 10 \neq \text{E}Y_{t=1} - \text{E}Y_{t=0} = 20$.

The results are provided in \textbf{Table 6}. The $\hat{CI}_t$ column provides the arithmetic average of the standard $t$-distribution confidence set endpoints. For reference, we also include $\hat{CI}_{\mathcal{U}}$ for confidence sets constructed from the concentration inequality in \textbf{Table 1} for $\mathcal{U}$ errors.

$\hat{CI}_{\mathcal{U}}$ is constructed as follows. Say $\mathbf{x}$ is the design matrix for a regression to estimate $\boldsymbol{\beta}$ and $\mathbf{w} = (\mathbf{x}^{\top} \mathbf{x})^{-1} \mathbf{x}^{\top}$. Then $\boldsymbol{\hat{\beta}} - \boldsymbol{\beta} = \mathbf{w} \boldsymbol{\epsilon}$. Therefore, $\hat{\beta}_T - \beta_T = \sum_{i=1}^n w_{i, s} \epsilon_i$, where $s$ is the row of $\mathbf{w}$ corresponding to treatment feature. Algebraic rearrangement of the concentration inequality then yields confidence sets of the form $\hat{\beta}_{T} \pm \sqrt{\sum_{i=1}^n R_i^2} \cdot \sqrt{6^{-1} \text{log}(2/\alpha)} $, where $R_i$ is the population range of $w_{i, s} \epsilon_i$. Under the assumption of a valid mean model specification, the maximum of the supports of the $\epsilon_i$ is feasibly estimable with $\hat{e}_{(n)}$ WLOG, where $\hat{e}$ is a typical residual. Hence, we can use an approximate confidence set of the following form when the extremes of the support of $Y$ are unknown: $\hat{\beta}_{T} \pm \{ \hat{e}_{(n)} - \hat{e}_{(1)} \}  \cdot \sqrt{\sum_{i=1}^n w^2_{i, s}} \cdot \sqrt{6^{-1} \text{log}(2/\alpha)} $. We contrast these confidence sets to those constructed with the Hoeffding bootstrap of Section 2.
\begin{table}[H]
\centering
\caption{Linear Regression for Functional Averages}
\begin{tabular}{cccccc}
  \hline
   \\[-3.5\smallskipamount] 
 & n & $\hat{\Delta}_{OLS}$ & $\hat{CI}_t$ & $\hat{CI}_{\mathcal{U}}$ & $\hat{CI}_{H}$ \\ 
  \\[-3.5\smallskipamount] 
  \hline
   \\[-3.5\smallskipamount] 
 & 500 & 20 & (19.62, 20.37) & (19.09, 20.91) & (19.9, 21.58) \\ 
   & 2500 & --- & (19.83, 20.17) & (19.52, 20.48) & (19.28, 20.71) \\ 
   & 5000 & --- & (19.88, 20.12) & (19.64, 20.35) & (19.49, 20.51) \\ 
   & 10000 & --- & (19.91, 20.09) & (19.74, 20.26) & (19.64, 20.36) \\ 
    \\[-3.5\smallskipamount] 
   \hline
\end{tabular}
\end{table}
Since the properties of linear regression are well understood, a thorough discussion is unnecessary. The results again substantiate the utility of $\mathcal{U}$ random variables. Under their framework, efficient estimation of causal parameters is more readily achievable, especially if mutual independence is a feasible assumption.

\section{A Data Application}

In this section, we employ NHEFS data to demonstrate our concepts. The NHEFS conducted medical examinations from 1971-1975 from non-institutionalized civilian adults aged 24-74 (N = 14,407) in the United States as part of a national probability sample. Follow-up surveys were then administered in 1982, 1984, and subsequent years to collect measurements for behavioral, nutritional, and clinical variables. Further documentation is available elsewhere \cite{madans198610}. The subset of data we use here (n=1,479) originates from the original 1971 medical examination and follow-up in 1982.

Exercise (0: moderate to much; 1: little to none) is the treatment variable ($T$) of interest. Age, chronic bronchitis/emphysema diagnosis (1: yes; 0: never), education attained in 1971 (1: $<$  8th grade; 2: HS dropout; 3: HS; 4: college dropout; 5: college), income, race (1: non-white; 0: white), sex (1: female; 0: male), years smoking, alcohol frequency, and weight (kilograms) are utilized as adjusting covariates (henceforth denoted as $\mathbf{L}$). Although SBP was measured with integer values, it is still treated as continuous in most instances. Pertinently, $T$ and most covariates were all measured in 1971. Only SBP and weight were measured in 1982. All continuous covariates are centered on their observed sample means for this analysis.

Here, we are interested in seeing if exercise exerted a causal effect on SBP in smokers. We aim to estimate $\text{E} \{ \text{Av}(Y^{t=1}_{\mathbf{L}}) - \text{Av}(Y^{t=0}_{\mathbf{L}}) \}$ and $\text{E} \{ \text{Av}(Y^{t=1}_{S}) - \text{Av}(Y^{t=0}_{S}) \}$ as summary causal effects, where $S=s$ represents a stratum that has been constructed from the quintiles of $\mathbf{\hat{e}(L)}$, the estimated propensity scores. Our goal is to estimate $\text{Av}(Y^{t=1}) - \text{Av}(Y^{t=0})$ and $\beta_t$ in addition.

To this end, race is treated as a confounder since it represents both genetic information and socio-historical constructions \cite{witzig1996medicalization}. In a similar vein, we choose to adjust for sex to account for possible biological influences, and since it is also an imperfect proxy for social institutions that can impact exercise habits, other health behaviors, and therefore blood pressure. All other variates mentioned are adjusted for since they are either known to effect both SBP and exercise habits directly or to act as conduits for more general institutional or ecological influences. Theoretically, adjusting for them can help to block backdoor paths from a subset of unknown confounders, which, although mostly irrelevant here in terms of their probabilistic effects, can still impact supports.
\newline
\paragraph{\textit{Methods}} The following methods are used to estimate possible causal effects: simple linear regression (LR), multiple linear regression with and without standardization (S; MR), linear regression adjusted for propensity score strata with standardization (PS), and sample functional average estimation (Av) with the discrete plug-in for eq. (2.1). Propensity scores are estimated with logistic regression using the same covariates as the regression model. No covariate transformations are used for the logit model, although we introduce higher order terms to the regression if it appears to improve linearity. Standard errors for all standardization estimators are calculated via a standard bootstrapping procedure with $B=1,000$ replications. We use the Hoeffding bootstrap of Section 3 with the same value of $B$ for the eq. (2.1) plug-in. Standard $ t$-statistic-based confidence sets are employed for the LR and MR models.

All tests are conducted at the $\alpha = .05$ level using R version 4.2.2 statistical software \cite{r}. The assumptions of $\mathcal{U}$ status and valid mean model specification are substantiated through the inspection of residual versus fitted plots and empirical CDF plots.
\newline
\paragraph{\textit{Results}}
The coefficient estimate for the simple linear regression is $\hat{\beta}_t = -3.87$ (95$\%$ CI: -5.821  -1.912), while the plug-in functional average estimate for $\text{Av}(Y^{t=1}) - \text{Av}(Y^{t=0})$ is $-3.81$ (95$\%$ CI: -24.17, 17.42). Additional model results are available in \textbf{Table 7}.
\begin{table}[H]
\centering
\caption{Summary of Model Results}
\begin{tabular}{cccc}
\hline
Parameter & Method & Estimate & $95 \%$ Confidence Interval  \\
\hline
$\hat{\beta}_t$ & MR & -.74 & $[-2.57,  1.1]$ \\
$\text{Av}(Y^{t=1}) - \text{Av}(Y^{t=0})$ & Av &  -3.81 & $[-24.17, 17.42]$ \\
$\text{E} \{ \text{Av}(Y^{t=1}_{\mathbf{L}}) - \text{Av}(Y^{t=0}_{\mathbf{L}}) \}$ & S & -.74 & $[ -2.56, 1.09]$ \\
$\text{E} \{ \text{Av}(Y^{t=1}_{\mathbf{S}}) - \text{Av}(Y^{t=0}_{\mathbf{S}}) \}$ & PS & -1.014 & $[-2.99, .97]$ \\
\hline
\end{tabular}
\end{table}
Initial fitting procedures for the multiple linear regression model showed mild departures from linearity. The addition of a quadratic term for age appeared to improve model fit. From this reformed model, the estimate for the expected functional average change in SBP is $\hat{\text{E}} \{ \text{Av}(Y^{t=1}_{\mathbf{L}}) - \text{Av}(Y^{t=0}_{\mathbf{L}}) \} = -0.74$ (95$\%$ CI: -2.56, 1.09). This estimate is approximately equivalent to the estimated (functional) average change in blood pressure for adult smokers who exercised, conditional on all other covariates: $-.74$ (95$\%$ CI: -2.57, 1.1). Finally, the estimate of the expected functional average change in SBP w.r.t. the propensity score stratified model is $\hat{\text{E}} \{  \text{Av}(Y^{t=1}_{S}) - \text{Av}(Y^{t=0}_{S})\} = -1.014$ (95$\%$ CI: -2.99, .97).
\newline
\paragraph*{\textit{Model checking}} Empirical CDF plots for the SBP distributions are presented by propensity score strata and by treatment status in \textbf{Figure 1}. The residual versus fitted plot for the MR model is also included.

\begin{figure}[H]
\caption{Model Validation Plots}
\includegraphics[scale=.6]{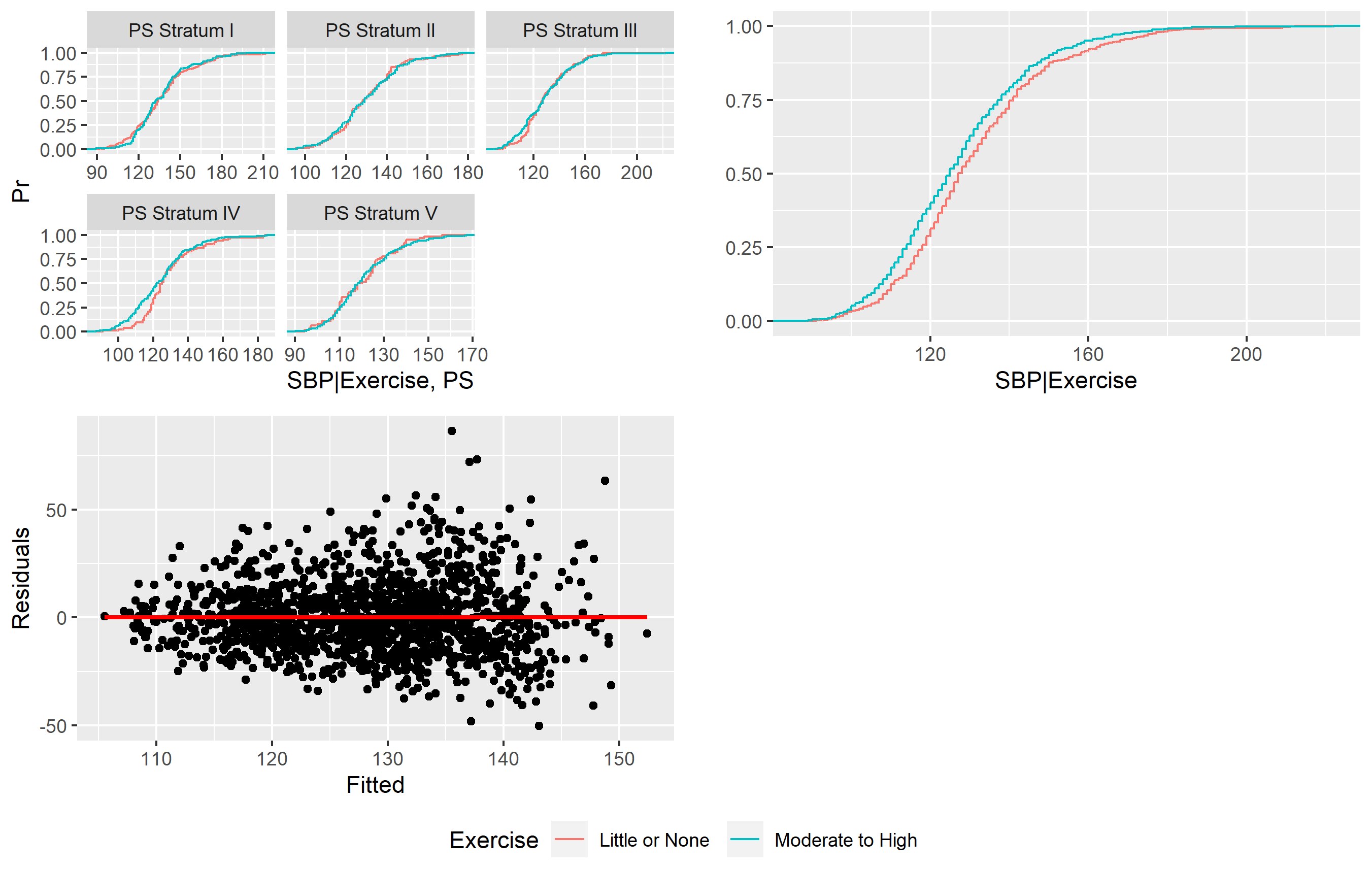} 
\end{figure}
We note no remarkable departures from linearity in the residual versus fitted plot for the multiple linear regression model. Moreover, the residuals appear to possess an approximately symmetric spread around the zero horizontal line, although some outlying points appear to violate the assumption that the errors are supported around symmetric extremes. Altogether, although departures from strict $\mathcal{U}$ status are observable, the assumption of $\mathcal{U}$ status appears to be feasibly met. The stratified empirical CDF plots in \textbf{Figure 1} do not appear to corroborate approximate sum-symmetric behavior since they show more area below the functional lines than above in most cases. Nevertheless, we note, conditional on an arbitrary stratum besides IV, that the area above each empirical CDF function appears roughly equal by exercise level. This alone is strong visual evidence that no difference in functional averages exists. The empirical CDF plots for the non-adjusted conditional distributions do not support the supposition of sum-symmetry. Again, this is because the areas above and below each function are not approximately equal. Hence, the results of the simple $t$-test cannot be afforded a causal interpretation w.r.t. a change in functional average.
\newline
\paragraph{\textit{Discussion}} Since the NHEFS was a national probability sample of non-institutionalized adults, there is little reason to believe that C4 was not fulfilled, conditional on our adjusting covariates. Recall that positing the opposing notion in this context is to affirm---for individuals who smoked---that there were possible values of SBP in each treatment population that had zero probability of being observed in the sample. Insofar as the NHEFS survey was truly a probability sample and hence non-informative, rejecting C4 also means that these potential values of SBP could never be observed in the real world. In conjunction with the fact that $\mathcal{U}$ status appeared to be approximately verified for the basic multiple regression model, we are confident that---conditional on our covariates---the expected (functional average) causal effect of exercise upon SBP in smokers is plausibly within a neighborhood of zero. This conclusion is further corroborated by the validity of C4 and the direct estimate of the difference in functional averages, which was also not statistically significantly different from zero. However, this test was hampered by the fact that each conditional SBP population possessed a relatively light right tail and the sample size was small.

Vitally, we have no reason to believe that we successfully adjusted for all confounding variables. Hence, we do not purport to interpret the former two effect estimates in terms of expected treatment effects. However, if mean exchangeability (and positivity) did hold, they would also be estimates of this contrast.

It is also apropos to note that the consistency assumption might be violated in this analysis. This is because respondents were asked if they exercised little to none, moderately, or much; however, the meanings of these words possess no absolutism. Hence, it is possible that multiple exercise treatments actually existed under the premise of one coding. This does not undermine what is formally specified in C4 conditional on the variable observed, although it does complicate the generalizability of the results if present.

\section{Conclusion}
In this article, we demonstrated that causal inference is achievable in the absence of mean exchangeability if the support of the counterfactual distribution is preserved. Moreover, we offered exposition on the possible utility and scientific meaningfulness of functional average change. To overcome some of the difficulties of functional average estimation, we introduced a simple class of random variables---the $\mathcal{U}$ class---that possesses a milieu of practical properties. Using the $\mathcal{U}$ random variable framework, we showed that ubiquitously employed statistical procedures produce estimates with causal interpretations under exceptionally mild conditions, many of which are already supposed in most applied settings to investigate associations. Hence, even if a researcher fails to control for all confounding variables, she still might be left with a second-prize of sorts, and one that possesses salient causal meaning. Since uncontrolled confounding is safely assumed to be nearly omnipresent outside of toy examples, we believe that this framework provides a strong defense of elementary methods. Further work is of course due. We observe that we did not pursue the sample minimum and maximum as counterfactual estimators in and of themselves, although the set of assumptions employed here also establish their utility for estimands with potential causal interpretations. Developing this area of theory will most certainly be advantageous.

Lastly, we also presented a new approach to the bootstrapping process. We called this approach the Hoeffding bootstrap. Although a less conservative form of it was proven, and strictly for the case s.t. the maximum bootstrap statistic is discrete or admits a density, we resorted to a defensible citation of the principle of indifference to extend a slightly more conservative version of it to a wider class of statistics. This is a good start. However, this cannot be the end. More theoretical work on the relationship between the extremes of the support of the empirical distribution of the statistic and those of the population distribution will most certainly provide fruit. A defensible set of sufficient conditions that ensure the approach more generally will do much to lighten the burden of uncertainty.
\newline
\paragraph*{\textbf{Acknowledgements}:} The NHEFS data was acquired from Dr. Miguel Hernan's faculty website ( https://www.hsph.harvard.edu/miguel-hernan/causal-inference-book/). We thank Dr. Hernan for making it accessible. Furthermore, we would like to thank the editor and reviewers for their diligent and insightful commentary, which improved the quality of this manuscript.
\newline
\paragraph*{\textbf{Funding information}:} We have no funding information to declare.
\newline\paragraph*{\textbf{Conflicts of Interest}:} Authors state no conflicts of interest. 

\bibliographystyle{vancouver}
\bibliography{ms}
\ifarXiv
    \foreach \x in {1,...,\numbersupplementpages}
    {
        \includepdf[pages={\x}]{\supplementfilename}
    }
\fi

\end{document}
